\begin{document}
\title{Spherical Averaged Endpoint Strichartz Estimates for The Two-dimensional Schr\"odinger Equations with Inverse Square Potential }
\author{I-Kun Chen}
\date{\today}

 \maketitle

 \oddsidemargin 0.0in

\newtheorem{thm}{Theorem}[section]
\newtheorem{defi}[thm]{Definition}
\newtheorem{lem}[thm]{Lemma}
\newtheorem{pro}[thm]{Proposition}
\newtheorem{rmk}[thm]{Remark}
\section{Introduction}
Strichartz estimates are crucial in handling local and global well-posedness problems for nonlinear dispersive equations (See\cite{boub}\cite{cazb}\cite{Taob}). For the Schrodinger equation below
\begin{equation}
\left\{
\begin{array}{l}i\partial_tu-\triangle u=0  \ \ \ \ u:\mathbb{R}^n\times \mathbb{R}^+\rightarrow \mathbb{C} \\ u(x,0)=u_0(x), \end{array}\right.
\end{equation}one considers estimates in mixed spacetime Lebesque norms of the type
\begin{equation}
\Vert u(x,t)\Vert_{L^q_tL^r_x}=(\int\Vert u(\cdot,t) \Vert_{L^r_x}^q dt)^{1/q}
.\end{equation}  Let us define the set of admissible exponents.
\begin{defi}
If $n $ is given, we say that the exponent pair $(q,r)$ is  admissible if $q,r\geq 2,\ (q,r,n)\neq(2,\infty,2) $ and they satisfy the relation
\begin{equation}
\frac2q+\frac n{r}=\frac n 2\ . \label{adm}
\end{equation}
\end{defi}
Under this assumption, the following estimates are known.
\begin{thm}
If  $(q,r,n)$ is admissible, we have the
estimates
\begin{equation}
\Vert u(x,t)\Vert_{L^q_tL^r_x}\leq \Vert u_0 \Vert_{L^2_x}
\ . \label{stest}\end{equation}
\end{thm}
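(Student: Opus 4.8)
The plan is to deduce \eqref{stest} from two elementary facts about the free propagator $U(t):=e^{it\triangle}$. The first is the conservation of mass, $\Vert U(t)f\Vert_{L^2_x}=\Vert f\Vert_{L^2_x}$, immediate from Plancherel since $U(t)$ is a Fourier multiplier of modulus one. The second is the dispersive estimate $\Vert U(t)f\Vert_{L^\infty_x}\le(4\pi|t|)^{-n/2}\Vert f\Vert_{L^1_x}$, read off from the explicit convolution kernel $(4\pi it)^{-n/2}e^{i|x|^2/4t}$. Riesz--Thorin interpolation between these two endpoints then gives, for every $2\le r\le\infty$,
\begin{equation}
\Vert U(t)f\Vert_{L^r_x}\lesssim |t|^{-n(\frac12-\frac1r)}\Vert f\Vert_{L^{r'}_x},\qquad \tfrac1r+\tfrac1{r'}=1. \label{disp}
\end{equation}

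Next I would run the $TT^*$ argument. Setting $Tf:=U(\cdot)f$, the desired bound $T:L^2_x\to L^q_tL^r_x$ is equivalent, by the standard duality lemma, to the boundedness of $TT^*:L^{q'}_tL^{r'}_x\to L^q_tL^r_x$, where $TT^*F(t)=\int_{\mathbb R}U(t-s)F(s)\,ds$; equivalently, it suffices to control the bilinear form $\iint\langle U(t-s)F(s),G(t)\rangle_{L^2_x}\,ds\,dt$. Combining \eqref{disp} with H\"older in $x$ yields the pointwise-in-$(s,t)$ bound $|\langle U(t-s)F(s),G(t)\rangle_{L^2_x}|\lesssim |t-s|^{-n(\frac12-\frac1r)}\Vert F(s)\Vert_{L^{r'}_x}\Vert G(t)\Vert_{L^{r'}_x}$, so everything reduces to the one-dimensional estimate $\iint |t-s|^{-n(\frac12-\frac1r)}\phi(s)\psi(t)\,ds\,dt\lesssim \Vert\phi\Vert_{L^{q'}_t}\Vert\psi\Vert_{L^{q'}_t}$.

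For the range $q>2$ this last inequality is exactly the Hardy--Littlewood--Sobolev inequality on $\mathbb R$, and one checks that the admissibility relation \eqref{adm} is precisely the balance of exponents $\frac2{q'}+n(\frac12-\frac1r)=2$ required for it to apply (with $0<n(\frac12-\frac1r)<1$ guaranteeing the kernel is of the admissible type); the remaining case $q=\infty$, hence $r=2$, is just the conservation of mass. In particular, when $n=2$ the condition \eqref{adm} forces the endpoint to be the excluded pair $(q,r)=(2,\infty)$, so the Hardy--Littlewood--Sobolev step already finishes the proof in the dimension relevant to this paper.

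The one genuinely delicate point, which arises only when $n\ge3$, is the endpoint $q=2$: there $n(\frac12-\frac1r)=1$, the kernel $|t|^{-1}$ on $\mathbb R$ is borderline, and the Hardy--Littlewood--Sobolev step degenerates. In that case one must instead decompose dyadically $|t-s|\sim2^{j}$, estimate each dyadic block of the bilinear form by interpolating \eqref{disp} against the energy bound so as to recover a small power $2^{-j\varepsilon}$ for Lebesgue exponents slightly off the critical one, and then resum using a Bourgain-type real-interpolation (atomic Lorentz-space) argument. I expect this endpoint resummation to be the only nontrivial obstacle; since the present work is two-dimensional it is not needed here.
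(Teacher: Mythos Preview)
Your argument is correct and is precisely the standard route (dispersive estimate $+$ $TT^*$ $+$ Hardy--Littlewood--Sobolev for the non-endpoint range, with the Keel--Tao dyadic/bilinear refinement at $q=2$ when $n\ge 3$). There is nothing to compare against, however: the paper does not prove this theorem at all. It is stated purely as background, with the proof delegated to the literature (Strichartz \cite{stri,stri1,stri2} for the diagonal case, Ginibre--Velo \cite{GV1} for the full non-endpoint range, and Keel--Tao \cite{tk} for the endpoint $q=2$, $n\ge 3$); the paper's own work begins only with the inverse-square potential in Theorem~\ref{mtheorem}. So your write-up is a faithful sketch of exactly the arguments the paper is citing.
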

From the scaling argument, or in other words dimensional analysis, we can see that the relation (\ref{adm}) is necessary for inequality (\ref{stest}) to hold.

There is a long line of investigation on this problem. The original work was done by Strichartz (see \cite{stri}\cite{stri1} \cite{stri2}). A more general result was done by Ginibre and Velo(See \cite{GV1}).
For dimension $n \geq 3$,  the endpoint cases $(q,r,n)=(2,\frac{2n}{n-2},n)$ was proved by Keel and Tao \cite{tk}.

The double endpoint $(q,r,n)=(2,\infty,2)$ is proved not to be true by Montgomery-Smith (see\cite{ms} ), even when we replace
$L_\infty$ norm with $BMO$ norm.
 However, it can be recovered in some special setting, for example Stefanov (See \cite{st}) and Tao (See \cite{Tao}). In particular, Tao replaces $L^\infty_x$ by a norm that takes  $L^2$ average over the angular variables then $L^\infty$ norm over the radial variable.

In the present work, I want to consider the end point estimates for  the   Schr\"odinger equation with inverse square potential,

\begin{equation} \label{eq1}
\left\{
\begin{array}{l}i\partial_tu-\triangle u+\frac{a^2}{|x|^2}u=0 \\ u(x,0)=u_0(x), \end{array}\right.
\end{equation} where $x\in\mathbb{R}^n$, and  initial data, $u_0\in L^2$. For $n\geq2$
the same Strichartz estimates as in Theorem 1 are proved by Planchon, Stalker, and Tahvidar-Zadeh(see \cite{pl1}). They did not cover the end point cases for $n=2$.

 We use
the same norm as Tao in \cite{Tao}.
We define the $L_\theta$ norm as follows.
\begin{defi}
\begin{equation}
\Vert f\Vert_{L_\theta}^2:=\frac1{2\pi}\int_0^{2\pi}|f(r\cos\theta, r\sin\theta)|^2 d\theta
.\end{equation}
\end{defi}
The main result in this paper is the following theorem.

\begin{thm} \label{mtheorem}
 For $x \in \mathbb{R}^2,\ a\geq 0$  , suppose $u(x,t)$ satisfies the following homogeneous initial value problem,
\begin{equation} \label{eq1}
\left\{
\begin{array}{l}i\partial_tu-\triangle u+\frac{a^2}{|x|^2}u=0 \\ u(x,0)=u_0(x), \end{array}\right.
\end{equation} then the following apriori  estimate holds
\begin{equation}
\Vert u \Vert_{L^2_t(L^\infty_rL_\theta)} \leq C\Vert u_0\Vert_{L^2(\mathbb{R}^2)}
.\end{equation}
\end{thm}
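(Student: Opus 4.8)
The plan is to reduce the estimate to the free case $a=0$ by exploiting the separation of variables afforded by the inverse-square potential, and then to run Tao's argument on each angular mode. First I would expand the initial data in a Fourier series in the angular variable, $u_0(r,\theta)=\sum_{k\in\mathbb{Z}} u_{0,k}(r)e^{ik\theta}$, so that by rotational invariance of the equation the solution splits as $u(r,\theta,t)=\sum_k u_k(r,t)e^{ik\theta}$, with each $u_k$ solving a one-dimensional radial Schr\"odinger equation whose radial part of the Laplacian, $-\partial_r^2-\tfrac1r\partial_r+\tfrac{k^2}{r^2}$, gets shifted to $-\partial_r^2-\tfrac1r\partial_r+\tfrac{k^2+a^2}{r^2}$. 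Thus the $a$-problem on the mode $k$ is \emph{identical} to the free problem on a fictitious mode of "angular momentum" $\nu_k=\sqrt{k^2+a^2}$. Since $\Vert u(\cdot,t)\Vert_{L^\infty_r L_\theta}^2=\Vert\sum_k|u_k(r,t)|^2\Vert_{L^\infty_r}$ by orthogonality of the $e^{ik\theta}$, and $\Vert u_0\Vert_{L^2}^2=2\pi\sum_k\Vert u_{0,k}\Vert_{L^2(r\,dr)}^2$, it suffices to prove the one-variable bound $\Vert u_k\Vert_{L^2_t L^\infty_r}\le C\Vert u_{0,k}\Vert_{L^2(r\,dr)}$ with the constant $C$ \emph{uniform in $k$ and $a$}, and then sum in $k$.

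Next I would set up the machinery for the fixed-mode estimate. Writing the propagator for the $\nu$-th mode via the Hankel transform of order $\nu$ — $e^{it\Delta_\nu}$ acts as multiplication by $e^{-it\rho^2}$ on the $\nu$-Hankel side — I would follow Tao's strategy: use a $TT^*$ argument to reduce to a bilinear bound on the kernel, decompose dyadically in the radial frequency, and prove the frequency-localized dispersive and energy estimates for the Hankel propagator. The key quantitative input is a uniform bound on the relevant Bessel-function kernels: one needs that $\int J_\nu(r\rho)J_\nu(s\rho)e^{-it\rho^2}\rho\,d\rho$, integrated against the measures coming from the $L^\infty_r$ norm, obeys estimates with constants independent of $\nu$. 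The standard tools here are the uniform asymptotics and pointwise bounds for $J_\nu$ (e.g.\ $|J_\nu(x)|\lesssim \min(x^{\nu},x^{-1/2})$ with constants that can be taken independent of $\nu\ge 0$ after suitable normalization, and the Barceló–Ruiz–Vega / Stempak-type weighted estimates for Hankel transforms), which is precisely why replacing $k$ by $\sqrt{k^2+a^2}$ costs nothing.

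Concretely, the sequence of steps is: (1) Fourier decomposition in $\theta$ and reduction, as above, to a $k$-uniform 1D bound; (2) rewrite each mode using the Hankel transform of order $\nu_k$ and identify the propagator; (3) by $TT^*$, reduce $\Vert u_k\Vert_{L^2_tL^\infty_r}\lesssim\Vert u_{0,k}\Vert_{L^2}$ to showing that the operator with kernel $\int e^{i(t-t')\rho^2} J_{\nu}(r\rho)J_{\nu}(r'\rho)\rho\,d\rho$ maps $L^2_{t'}(L^1_{r'})\to L^2_t(L^\infty_r)$ with norm $O(1)$ in $\nu$; (4) dyadically localize in $\rho$, prove for each piece the fixed-time $L^1\to L^\infty$ dispersive decay and the $L^2\to L^2$ energy bound with $\nu$-independent constants using Bessel asymptotics, and interpolate/sum the Hardy–Littlewood–Sobolev-type estimate exactly as in Keel–Tao and Tao; (5) sum the squares over $k$ using Plancherel for the Fourier series and the Hankel transform.

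The main obstacle I expect is step (4): obtaining the dispersive and kernel estimates for the Hankel propagator of order $\nu$ with constants genuinely \emph{uniform} in $\nu\in[0,\infty)$, especially across the transition region $r\rho\sim\nu$ where the Bessel function changes character (oscillatory for $r\rho\gtrsim\nu$, exponentially small for $r\rho\lesssim\nu$). One must either invoke precise uniform Bessel asymptotics (uniform in both argument and order) or, following Planchon–Stalker–Tahvidar-Zadeh, extract just enough decay from the explicit oscillatory-integral representation to close the $L^1\to L^\infty$ bound without $\nu$-dependence. A secondary subtlety is the endpoint nature of $q=2$: as in Keel–Tao one cannot use the naive $TT^*$ with Hardy–Littlewood–Sobolev at the endpoint, so a Bochner–Riesz-type or real-interpolation argument with the Lorentz-space improvement (or Tao's atomic decomposition of the $L^2_t$ norm) is needed — but once the $\nu$-uniform kernel bounds are in hand, that part is identical to the $a=0$ case already treated by Tao.
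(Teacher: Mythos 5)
Your outline gets the reduction exactly right and matches the paper: expand $u_0$ in spherical harmonics, observe that the $k$-th mode satisfies the radial equation with effective order $\nu_k=\sqrt{k^2+a^2}$, note that $\|u(\cdot,t)\|^2_{L^\infty_r L_\theta}\le\sum_k\sup_r|u_k(r,t)|^2$, and reduce to a $\nu$-uniform one-dimensional bound via the Hankel transform, then sum squares. Steps (1), (2), and (5) are the paper's argument.

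The gap is in steps (3)--(4). You propose the standard Strichartz machine: $TT^*$, dyadic localization in the radial frequency $\rho$, fixed-time $L^1\to L^\infty$ dispersive decay plus $L^2$ energy bound, closed by "Keel--Tao-style interpolation" or an atomic decomposition, with the endpoint treated as a "secondary subtlety" deferred to Tao. This cannot work as stated. The pair $(q,r,n)=(2,\infty,2)$ is the double endpoint of the Strichartz scale, precisely where Keel--Tao's real-interpolation argument fails and where Montgomery-Smith showed the full estimate is false. There is no version of the dispersive-plus-energy interpolation that recovers $L^2_tL^\infty$ in two dimensions, not even with Lorentz improvements. The fixed-mode estimate is achievable not because of a more refined interpolation of dispersive bounds but because of a completely different mechanism, which your proposal does not identify.

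What the paper (following Tao's 2D argument) actually does is partition $J_\nu$ itself into three pieces according to the size of its argument \emph{relative to $\nu$}: a low piece $m^0_\nu$ supported on $r\rho<\nu/\sqrt2$, a middle piece $m^1_\nu$ supported where $r\rho\sim\nu$, and high pieces $m^j_\nu$ on $r\rho\sim2^j$ for $j\gg\log\nu$. For the low and high pieces the key is not dispersive decay at all but the exact scaling of the associated kernel $K^j_{\nu,r}(\eta)=r^{-2}K^j_{\nu,1}(\eta/r^2)$: once one proves $|K^j_{\nu,1}|$ is dominated by an even, decreasing $L^1$ majorant with $\|K^j_{\nu,1}\|_{L^1}\lesssim 2^{-j/2}$ (uniform in $\nu$), the approximate-identity bound gives $\sup_{r>0}|K^j_{\nu,r}*g(t)|\lesssim \|K^j_{\nu,1}\|_{L^1}\,Mg(t)$ with $M$ the Hardy--Littlewood maximal operator, and the $L^2_t$ bound is immediate from the maximal theorem — no Hardy--Littlewood--Sobolev or Lorentz interpolation is involved. ($TT^*$ and stationary phase enter only in establishing the high-frequency kernel decay; it is not a $TT^*$-to-dispersive reduction.) The middle piece $r\rho\sim\nu$, where $J_\nu$ is largest and cannot be handled this way, requires a separate argument: a weighted $L^2$--Plancherel embedding into $L^\infty_r$ with a parameter $b$ tuned dyadically in the $\rho$-frequency of $g$, combined with a dyadic pigeonholing in $r$ to reduce to finitely many overlapping frequency blocks. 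The $\nu$-uniformity then rests on concrete, elementary Bessel bounds — $J_\nu(r)\lesssim (r/2)^\nu/\Gamma(\nu+1)$ plus Stirling for the low piece, and Van der Corput bounds $|J_\nu|\lesssim\nu^{-1/3}$, $|J'_\nu|\lesssim\nu^{-1/2}$ for $r\sim\nu$ for the middle piece — rather than the general uniform-in-order asymptotics you invoke. So the missing ideas are the trichotomy of $J_\nu$ relative to $\nu$ (not merely dyadic localization in $\rho$) and the scaling/maximal-function mechanism that replaces Keel--Tao interpolation at the double endpoint.
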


Let us consider the equation in polar coordinates. Write $v(r,\theta,t)=u(x,t)$
and $ f(r,\theta)=u_0(x)$. We have that $v(r,\theta,t)$ satisfies the equation below,
\begin{equation}
\left\{
\begin{array}{l}i\partial_tv-\partial_r^2 v -\frac1r\partial_rv-\frac1{r^2}\partial_\theta^2v+\frac {a^2 }{r^2}v=0\\ v(r,\theta,0)=f(r,\theta) \end{array}\right.
.\end{equation}
We write  the initial data as superposition of spherical harmonic
functions, as follows
\begin{equation*}
f(r,\theta)=\sum_{k\in \mathbb{Z}}f_k(r)e^{ik\theta}
.\end{equation*}
Using separation of variables, we can write $v$ as a superposition,
 \begin{equation*}
v(r,\theta,t)=\sum_{-\infty}^{\infty}e^{ik\theta}v_k(r,t)
,\end{equation*} where the  radial functions, $v_k$, satisfy the equations below
\begin{equation}\left\{
\begin{array}{l}i\partial_tv_k-\partial_r^2v_k-\frac1r\partial_rv_k+\frac{a^2+k^2}{r^2}v_k=0 ,\ \ \ \ k\in\mathbb{Z}\\
v_k(r,0)=f_k(r)\end{array}\right. \label{fkeq}.\end{equation}

\begin{rmk} Combining Tao's result in  \cite{Tao} with the equation (\ref{fkeq}) above, we can conclude that
Theorem \ref{mtheorem} is true for  special cases $a \in \mathbb{N}$ and $u$ is radially symmetric.  However, the analysis in \cite{Tao} does not apply to general cases. \end{rmk}
For fixed r, we take $L_\theta$ norm and from the orthogonality of spherical harmonics, we have
\begin{equation*}
\vert\vert
v(r,\theta,t)\vert\vert^2_{L_\theta}=\sum_{k\in\mathbb{Z}}|v_k(r,t)|^2
.\end{equation*}
We will prove the following lemma.
\begin{lem}
Suppose $v_k$ satisfies (\ref{fkeq}), for every $k\in\mathbb{Z}$ the following apriori estimate holds.
\begin{equation}
\int|v_k(r,t)|_{L^\infty_r}^2dt \leq C \int_0^\infty |f_k(r)|^2rdr \label{kineq}
,\end{equation}
where C is a constant independent of $k$ \label{lmkieq}
\end{lem}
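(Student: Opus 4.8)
The equation (\ref{fkeq}) is a radial Schrödinger equation, which after the substitution $w_k(r,t) = r^{1/2} v_k(r,t)$ becomes a one-dimensional problem on the half-line with potential $\frac{a^2+k^2 - 1/4}{r^2}$. So this reduces to: the Schrödinger propagator for $-\partial_r^2 + \frac{\nu^2 - 1/4}{r^2}$ where $\nu^2 = a^2 + k^2$. This is exactly the Bessel operator. The solution can be written explicitly via the Hankel transform (the spectral transform for the Bessel operator): if $\tilde{f}_k(\rho) = \int_0^\infty f_k(r) J_\nu(r\rho) r\,dr$ is the Hankel transform of order $\nu$, then
$$v_k(r,t) = \int_0^\infty e^{-it\rho^2} \tilde{f}_k(\rho) J_\nu(r\rho) \rho\, d\rho.$$
So the task is to prove the fixed-time $L^\infty_r$ bound in the $L^2_t$-averaged sense, uniformly in $\nu \geq |k|$ (hence in $k$, since $a$ is fixed and $\nu = \sqrt{a^2+k^2} \geq |k|$, and also $\nu \geq a$).

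So the plan is: (1) use the Hankel representation above; (2) by a $TT^*$ argument (à la Keel–Tao), reduce the estimate $\|v_k\|_{L^2_t L^\infty_r} \lesssim \|f_k\|_{L^2(r\,dr)}$ to a bilinear/dispersive estimate for the operator $U(t) = e^{-it\rho^2}$ in the Hankel picture — concretely, bounding the kernel
$$K(r,s,t) = \int_0^\infty e^{-it\rho^2} J_\nu(r\rho) J_\nu(s\rho) \rho \, d\rho$$
appearing in $U(t)U(s)^*$. One needs a pointwise-in-$(r,s)$, but $L^2$ or $L^1$-in-$t$ (or suitable interpolation) control of this kernel that is uniform in $\nu$. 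The analogous computation for the free case ($\nu = k+\tfrac{n-2}{2}$, Tao's setting) uses the large-argument asymptotics $J_\nu(z) \sim \sqrt{2/\pi z}\cos(z - \nu\pi/2 - \pi/4)$ together with bounds on $J_\nu$ in the transition region $z \sim \nu$. The key input is a \emph{uniform} (in $\nu \geq 0$) bound of the form $|J_\nu(z)| \lesssim z^{-1/2}$ for $z \gtrsim 1$ and $|J_\nu(z)| \lesssim \nu^{-1/3}$ near $z = \nu$, plus $|J_\nu(z)| \lesssim (z/\nu)^\nu$ for $z \ll \nu$; these are classical (Landau's uniform bounds).

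Concretely, I would: bound $\|v_k(\cdot,t)\|_{L^\infty_r}$ by splitting the $\rho$-integral dyadically and estimating $\sup_r \left|\int J_\nu(r\rho) e^{-it\rho^2}\tilde f_k(\rho)\rho\,d\rho\right|$ on each piece, then use an almost-orthogonality (Littlewood–Paley / $TT^*$) argument in $t$ to sum. The cleanest route is probably to prove the frequency-localized dispersive estimate: if $\tilde f_k$ is supported in $\rho \sim 2^j$, then $\|v_k(\cdot,t)\|_{L^\infty_r} \lesssim \min(2^j, |t|^{-1/2} 2^{j/2}) \cdot (\text{something}) \|f_k\|_2$ — more precisely one wants, after the $TT^*$ reduction, the bound $|K(r,s,t)| \lesssim |t|^{-1/2}$ uniformly in $r,s,\nu$, which then by the standard Keel–Tao machinery (the $L^2_t L^\infty_r$ endpoint on a one-dimensional spatial domain behaves like the non-endpoint case in higher dimensions once we have the right dispersive decay) yields the claim. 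The uniformity in $\nu$ is what must be watched at every step.

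The main obstacle, and the heart of the matter, will be establishing the uniform-in-$\nu$ dispersive bound for the kernel $K(r,s,t) = \int_0^\infty e^{-it\rho^2} J_\nu(r\rho)J_\nu(s\rho)\rho\,d\rho$. For large $\nu$ the Bessel functions have a long "lagoon" where they are small but oscillate with a $\nu$-dependent phase, and in the transition/turning-point region $r\rho \sim \nu$ the Airy-type behavior gives only $\nu^{-1/3}$ decay rather than $\rho^{-1/2}$; one must show these regions do not destroy the $|t|^{-1/2}$ decay after integration in $\rho$, using stationary phase in $\rho$ (the phase $-t\rho^2 \pm r\rho \pm s\rho$ has a nondegenerate stationary point) together with the classical uniform asymptotics/bounds for $J_\nu$. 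There is a genuine subtlety because $a$ need not be an integer, so $\nu = \sqrt{a^2+k^2}$ ranges over a non-lattice set and the "integer shift" tricks available in Tao's radial argument are unavailable; the estimate must be robust in the continuous parameter $\nu \in [a,\infty)$. I expect the bulk of the work is a careful case analysis (the regions $r\rho, s\rho \ll \nu$; one or both $\sim \nu$; both $\gg \nu$) with the stationary phase / van der Corput estimates assembled uniformly, after which the passage from the dispersive bound to (\ref{kineq}) is the by-now standard abstract Strichartz argument of Keel–Tao applied with target space $L^\infty_r$ in one spatial variable.
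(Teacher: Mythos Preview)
Your starting point (the Hankel representation of $v_k$) and the identification of the three regimes $r\rho\ll\nu$, $r\rho\sim\nu$, $r\rho\gg\nu$ as the places requiring separate treatment are both correct and match the paper. But the proposed endgame---prove a uniform pointwise dispersive bound $|K(r,s,t)|\lesssim |t|^{-1/2}$ for the kernel $K(r,s,t)=\int_0^\infty e^{-it\rho^2}J_\nu(r\rho)J_\nu(s\rho)\rho\,d\rho$ and then invoke the abstract Keel--Tao machinery---has a genuine gap. The kernel $K$ is (up to constants) $t^{-1}e^{i(r^2+s^2)/4t}J_\nu(rs/2t)$, so $\sup_{r,s}|K(r,s,t)|\sim |t|^{-1}$, not $|t|^{-1/2}$; the problem is a two-dimensional one (the measure is $r\,dr$), not a one-dimensional one. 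With decay exponent $\sigma=1$, the pair $(q,r)=(2,\infty)$ is exactly the excluded case $(q,r,\sigma)=(2,\infty,1)$ in Keel--Tao, which is the entire reason the estimate in this lemma requires a separate argument. So ``apply the by-now standard abstract Strichartz argument'' cannot close the proof here.

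The paper proceeds differently. After the change of variables $y=\rho^2$ one writes $v_k(r,t)$ as an inverse Fourier transform \emph{in $t$} of $J_\nu(r|y|^{1/2})h(y)$, i.e.\ as a one-parameter family (indexed by $r$) of Fourier multiplier operators acting on an $L^2(\mathbb{R}_t)$ function $g$. One then splits the multiplier $J_\nu$ into pieces $m_\nu^0$ (argument $\lesssim\nu$), $m_\nu^1$ (argument $\sim\nu$), and $m_\nu^j$ (argument $\sim 2^j$, $j\gg\log\nu$), and bounds $\sup_r|T^j_{\nu,r}g(t)|$ in $L^2_t$ for each piece. The key structural point you are missing is that for the low and high pieces the convolution kernel in $t$ obeys $K^j_{\nu,r}(\eta)=r^{-2}K^j_{\nu,1}(\eta/r^2)$ with $K^j_{\nu,1}$ dominated by a fixed even decreasing $L^1$ profile; hence $\sup_{r>0}|T^j_{\nu,r}g(t)|$ is controlled by the Hardy--Littlewood maximal function $Mg(t)$, and the $L^2_t$ bound follows from the maximal inequality rather than from any dispersive/Keel--Tao mechanism. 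The middle piece ($r\rho\sim\nu$) is handled by a direct Plancherel-in-$t$ argument combined with the uniform size bounds $\int|J_\nu|^2\lesssim 1$, $\int|J_\nu'|^2\lesssim 1$ on the transition region. Only the high-frequency piece uses a $TT^*$ argument, and even there it is $TT^*$ \emph{in the $t$ variable} (with $r=r(t)$ an arbitrary measurable selection), reducing again to a maximal-function bound---not a spatial $L^1\to L^\infty$ dispersive estimate.
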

The main theorem follows from the Lemma \ref{lmkieq} above because of the following observation,

\begin{eqnarray*}
||v(r,\theta,t)||^2_{L^2_tL^\infty_rL_\theta}=\int\left(\sup_{r>0}\{(\sum_{k\in\mathbb{Z}}|v_k(r,t)|^2)^{\frac12}\}\right)^2dt\\
\leq
\sum_{k\in\mathbb{Z}}\int\sup_{r>0}|v_k(r,t)|^2dt\\\leq\int_0^\infty
\sum_{k\in\mathbb{Z}}|f_k(r)|^2rdr=\int_0^{2\pi}\int_0^\infty |f(r)|^2rdrd\theta.
\end{eqnarray*}
The rest of the paper is devoted to the proof of Lemma \ref{lmkieq}.
\section{Hankel Tranform}
The main tools will be the Fourier and Hankel transforms.
We want to introduce certain well-known properties of Hankel transform which are necessary for the proof.
We consider the kth mode in spherical harmonic. Let $\nu(k)^2=a^2+k^2$.  We define the following
elliptic operator
\begin{equation}
A_\nu:=-\partial^2_r-\frac{1}{r}\partial_r+\frac{\nu^2}{r^2} .
\end{equation}
For fixed $k$,  we skip the $k$ in the notation for convenience.   Equation (\ref{fkeq}) becomes
\begin{equation}\left\{
\begin{array}{l}i\partial_tv+A_\nu v=0 ,\ \ \ \\
v(r,0)=f(r)\end{array}\right. \label{fkeq2}.\end{equation}
Next, we define the Hankel transform as follows.
\begin{equation}
\phi^\#(\xi):=\int^{\infty}_0 J_\nu (r \xi))\phi(r)rdr,
\end{equation}
where $J_\nu$ is the Bessel function of real order $\nu>-\frac12$ defined via,
\begin{equation}J_\nu(r)=\frac{({r/2})^\nu}{\Gamma(\nu+1/2)\pi^{1/2}}\int^1_{-1}e^{irt}(1-t^2)^{\nu-1/2}dt. \label{besseldef}\ .
\end{equation}

The following properties of the Hankel transform are well known, (See \cite{pl1})

\begin{pro}

\begin{eqnarray*}
&(i)& \ (\phi^\#)^\#=\phi\\
&(ii)& \ (A_\nu\phi)^\#(\xi)=|\xi|^2\phi^\#(\xi)\\
&(iii)&\int_0^\infty|\phi^\#(\xi)|^2\xi d\xi=\int^{\infty}_0|\phi(r)|^2 r dr.
\end{eqnarray*}
\label{pro1}\end{pro}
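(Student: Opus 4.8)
The plan is to establish $(ii)$ first, as it is the most elementary, and then to deduce both $(i)$ and $(iii)$ from a single analytic ingredient: the completeness relation for Bessel functions of order $\nu$. Throughout I would work with $\phi$ in the dense class $C_c^\infty((0,\infty))$, so that all boundary terms at $r=0$ and $r=\infty$ are trivially absent, and extend the conclusions to all of $L^2((0,\infty),r\,dr)$ at the very end using $(iii)$.

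For $(ii)$: differentiating $(\ref{besseldef})$ one checks that $g_\xi(r):=J_\nu(r\xi)$ solves Bessel's equation in the form $A_\nu g_\xi=\xi^2 g_\xi$. Writing $A_\nu=-\frac1r\partial_r(r\partial_r)+\frac{\nu^2}{r^2}$ exhibits $A_\nu$ as symmetric with respect to the measure $r\,dr$, with boundary form $\big[r\big(\phi\,\partial_r g_\xi-g_\xi\,\partial_r\phi\big)\big]_0^\infty$. For $\phi\in C_c^\infty((0,\infty))$ this vanishes, so two integrations by parts give
\[
(A_\nu\phi)^\#(\xi)=\int_0^\infty J_\nu(r\xi)\,(A_\nu\phi)(r)\,r\,dr=\int_0^\infty (A_\nu g_\xi)(r)\,\phi(r)\,r\,dr=\xi^2\,\phi^\#(\xi),
\]
which is $(ii)$.

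The crux is the completeness relation
\[
\int_0^\infty J_\nu(r\xi)\,J_\nu(s\xi)\,\xi\,d\xi=\frac{1}{r}\,\delta(r-s)
\]
understood in the distributional sense on $(0,\infty)$. I would prove it by Gaussian regularization: a classical formula of Weber gives, for $\varepsilon>0$,
\[
\int_0^\infty e^{-\varepsilon\xi^2}J_\nu(r\xi)\,J_\nu(s\xi)\,\xi\,d\xi=\frac{1}{2\varepsilon}\,\exp\!\Big(-\frac{r^2+s^2}{4\varepsilon}\Big)\,I_\nu\!\Big(\frac{rs}{2\varepsilon}\Big),
\]
and the asymptotics $I_\nu(z)\sim e^{z}(2\pi z)^{-1/2}$ as $z\to\infty$ identify the right-hand side with $(4\pi\varepsilon)^{-1/2}(rs)^{-1/2}\exp\!\big(-(r-s)^2/4\varepsilon\big)\,(1+o(1))$, which converges to $r^{-1}\delta(r-s)$ as $\varepsilon\downarrow0$ when tested against a fixed $\psi\in C_c^\infty((0,\infty))$. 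Granting this, $(i)$ follows: insert the regulator $e^{-\varepsilon\xi^2}$ into the iterated integral for $(\phi^\#)^\#$, apply Fubini (now legitimate, $J_\nu$ being bounded and the $\xi$-integral absolutely convergent), and let $\varepsilon\downarrow0$ to obtain
\[
(\phi^\#)^\#(s)=\int_0^\infty\phi(r)\,r\Big(\int_0^\infty J_\nu(r\xi)J_\nu(s\xi)\,\xi\,d\xi\Big)dr=\int_0^\infty\phi(r)\,\frac{\delta(r-s)}{r}\,r\,dr=\phi(s).
\]
Then $(iii)$ is Plancherel's identity: since $J_\nu$ is real, $\overline{\phi^\#}=(\overline\phi)^\#$, so by Fubini and $(i)$,
\[
\int_0^\infty|\phi^\#(\xi)|^2\,\xi\,d\xi=\int_0^\infty\overline{\phi(r)}\Big(\int_0^\infty\phi^\#(\xi)J_\nu(r\xi)\,\xi\,d\xi\Big)r\,dr=\int_0^\infty\overline{\phi(r)}\,(\phi^\#)^\#(r)\,r\,dr=\int_0^\infty|\phi(r)|^2\,r\,dr.
\]
Finally $(iii)$ shows that $\#$ extends to an isometry of $L^2((0,\infty),r\,dr)$, and $(i)$–$(ii)$ persist on this space by density.

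The step I expect to be the main obstacle is the rigorous passage $\varepsilon\downarrow0$ together with the interchanges of integration it licenses: because $J_\nu(r\xi)$ decays only like $\xi^{-1/2}$, the unregularized integrals in $(i)$ and in the completeness relation are at best conditionally convergent and must be read as improper or distributional objects. The Gaussian regulator, the explicit Weber integral, and the uniform control of the $I_\nu$-asymptotics on compact subsets of $\{r,s>0\}$ are precisely what make the manipulations valid; the mild singularity at $r=0$, where $J_\nu(r\xi)\sim r^\nu$, is harmless since $r^\nu\cdot r$ is integrable for $\nu>-\frac12$ and since we restrict to $C_c^\infty((0,\infty))$ before extending.
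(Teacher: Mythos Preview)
Your argument is correct. It is worth noting, though, that the paper does not prove Proposition~\ref{pro1} at all: it records the three identities as ``well known'' and defers to the reference~\cite{pl1}. So there is no ``paper's own proof'' to compare against; you have in fact supplied more than the paper does.

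On the substance: your route is the standard one. Part $(ii)$ is exactly the observation that $J_\nu(r\xi)$ is a formal eigenfunction of $A_\nu$ together with the symmetry of $A_\nu$ in $L^2(r\,dr)$; your integration-by-parts argument on $C_c^\infty((0,\infty))$ is clean. For $(i)$ you invoke Weber's second exponential integral and the large-argument asymptotics of $I_\nu$ to recover the distributional completeness relation $\int_0^\infty J_\nu(r\xi)J_\nu(s\xi)\,\xi\,d\xi=r^{-1}\delta(r-s)$, which is the heart of the Hankel inversion theorem; this is correct and is essentially how the classical references (e.g.\ Watson~\cite{wa}) argue. Your derivation of $(iii)$ from $(i)$ is fine once one notes that for $\phi\in C_c^\infty((0,\infty))$ the transform $\phi^\#$ decays rapidly (repeated integration by parts against the oscillatory asymptotics of $J_\nu$), so the Fubini step is genuinely justified and not merely formal. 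The final density extension via $(iii)$ is routine.

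One small remark: in the displayed chain for $(iii)$ you silently interchange an iterated integral that is only conditionally convergent in $\xi$ before regularization. Since you have already set up the Gaussian regulator for $(i)$, the tidiest fix is to run the same $e^{-\varepsilon\xi^2}$ through the $(iii)$ computation as well and pass to the limit at the end; alternatively, observe as above that $\phi^\#$ is rapidly decreasing so the unregularized double integral is in fact absolutely convergent.
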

If we apply Hankel transform on  equation (\ref{fkeq2}), we obtain
\begin{equation}\left\{
\begin{array}{l}i\partial_t v^\#(\xi,t)+|\xi|^2v^\#(\xi,t)=0 ,\\ \ \ \ \
v^\#(\xi,0)=f^\#(\xi)\end{array}\right. \label{fkeq3}.\end{equation}
Solving the ODE and inverting the Hankel transform, we have the formula
\begin{equation}
v(r,t)=\int^{\infty}_0 J_\nu (s r)e^{is^2t}f^\#(s)sds
.\end{equation}
 The change of variables, $y=s^2$, implies
\begin{equation}\label{vexp}
v(r,t)=\frac12\int^{\infty}_0 J_\nu (r\sqrt{y})e^{iyt}f^\#(\sqrt{y})dy
.\end{equation}
Let us  define the function $h$ as follows
\begin{equation*}
h(y):=\left\{ \begin{array}{l}f^\#(\sqrt{y})\ \ y>0\\
0\ \ \ \ \ \ \ \ \ \  y\leq 0.\end{array} \right.
\end{equation*}
Then the expression in (\ref{vexp}) becomes
\begin{equation}
v(r,t)=\frac12\int_{\mathbb{R}} J_\nu (|r||y|^{1/2})h(y)e^{iyt}dy
.\label{fm1}\end{equation}
From the Proposition \ref{pro1}, we have
\begin{equation}
\int^\infty_{-\infty}|h(y)|^2dy=\frac12\int^\infty_0|f^\#(s)|^2sds=\frac12\int^\infty_0|f(\eta)|^2\eta
d\eta< \infty.
\end{equation} So, h is an $L^2$ function.
We will work with  $h(y)$  belonging to Schwartz class. These are $C^\infty$ functions that tend to zero faster than any polynomial at infinity, i.e.
\begin{equation*}
S(\mathbb{R})=\{f(x)\in C^\infty (x)|\sup_{x\in\mathbb{R}} \frac{d^\alpha f(x)}{d^\alpha x}<C^{\alpha\beta}x^{-\beta}\   \forall \
 \alpha,\beta\in\ \mathbb{N} \}.
\end{equation*}
 The general case of $h \in L_2$ follows by a density argument.
We use smooth cut off fountions to  partition the Bessel function $J_\nu$  as follwos,
 \begin{equation}
J_\nu(\eta)=m_\nu^0(\eta)+m_\nu^1(\eta)+\sum_{j\gg \log \nu}m_\nu^j(\eta),
\end{equation}
where $m_\nu^0$, $m_\nu^1$ and $m_\nu^j$ are supported on $\eta<\frac\nu{\sqrt{2}}$, $\eta \sim \nu$
 and $ \eta \sim 2^j  $ for $ j \gg \log_\nu$ respectively.
Let $J_\nu^k=\sum_0^k m_\nu^j$.
Equation (\ref{fm1}) holds in the sense that we can write
\begin{equation}
v(r,t)=\lim_{k\rightarrow \infty}\frac12\int_{\mathbb{R}} J_\nu^k (|r||y|^{1/2})h(y)e^{iyt}dy.
\end{equation}
 Substituting $h$ by the
inverse Fourier formula
\begin{equation*}
h(y)=\int_{\mathbb{R}}e^{i(\eta-t)y}\hat{h}(\eta-t)d\eta,
\end{equation*}
and changing the order of integration, we have

\begin{equation}
v(r,t)=\lim_{k\rightarrow\infty}\int_{\mathbb{R}}\left(\frac12\int_{\mathbb{R}} J^k_\nu
(\sqrt{|y|}|r|)e^{i\eta y}dy\right)\hat{h}_k(\eta-t)d\eta \label{fm2}
.\end{equation}
Let  us define the kernel below \begin{equation}\label{kndef} K_{\nu,r}^j(\eta)=\frac12\int_{\mathbb{R}} m_\nu^j
(\sqrt{|y|}|r|)e^{i\eta y}dy.
\end{equation}
For convenience, rename  $g(y)=\hat{h}(-y)$ and
define an operator
\begin{equation}
T^j_{\nu,r}[g](t)=(K^j_{\nu,r}*g)(t)
.\end{equation}
Since it is a convolution, it becomes a multiplication in Fourier space.
Thus, this operator has another equivalent expression
\begin{equation}
T^j_{\nu,r}[g](t)=\frac1{\sqrt{2\pi}}\int m_\nu^j(r|\xi|^\frac12)\widehat{g}(\xi)e^{i\xi t}d\xi \label{tjrdefeq}
.\end{equation}
Notice that both the kernel $K^j_{\nu,r}$ and the operator $T^j_{\nu,r}$ are functions of $\nu$.
We can rewrite equation (\ref{fm2}) in the following form,
\begin{equation}
v(r,t)=\lim_{k\rightarrow\infty}\sum_{j\leq k}T^j_{\nu,r}[g(\eta)](t)
.\end{equation}
The main theorem in this paper will follow from the lemma below.

\begin{lem}\label{3bounds}
For $g\in L^2$, $a\geq 0 $, $C_1$, $C_2$, $C_3$ independent of $\nu^2(k)=a^2+k^2$ $k\in \mathbb{N}$, the following estimates hold.
\begin{equation}
   \int_{\mathbb{R}}\sup_{r>0}|T_{\nu,r}^0(g)(t)|^2dt\leq C_1\int_\mathbb{R}|g(y)|^2dy
,\label{estt0}\end{equation}
\begin{equation}
   \int_{\mathbb{R}}\sup_{r>0}|T_{\nu,r}^1(g)(t)|^2dt\leq C_2\int_\mathbb{R}|g(y)|^2dy
,\label{estt1}\end{equation}
\begin{equation}
   \int_{\mathbb{R}}\sup_{r>0}|T_{\nu,r}^j(g)(t)|^2dt\leq C_32^{-\frac12j}\int_\mathbb{R}|g(y)|^2dy \ for\ j\gg\log\nu
\ \ \  \label{esttj}\end{equation}  .
\end{lem}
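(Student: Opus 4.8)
The plan is to estimate the three operators $T^0_{\nu,r}$, $T^1_{\nu,r}$, $T^j_{\nu,r}$ separately, following the same broad philosophy as Tao's treatment of the free radial case but keeping all constants uniform in $\nu$. Since $\sup_{r>0}|T^j_{\nu,r}(g)(t)|\le \sum_{\text{dyadic }r} \text{(something)}$ is too lossy, the right move is to exploit that for each of the three pieces the Bessel multiplier $m^j_\nu(r|\xi|^{1/2})$ is essentially supported where $r|\xi|^{1/2}$ lies in a fixed dyadic-type region, so that the $r$-dependence is tame. For the low piece $T^0_{\nu,r}$ (where $\eta<\nu/\sqrt2$, i.e.\ the Bessel function is in its "small argument / exponentially decaying" regime once $\nu$ is large, and behaves like $\eta^\nu$ near $0$), I would use the power-series / integral representation \eqref{besseldef} to get a pointwise bound on $K^0_{\nu,r}(\eta)$ that is integrable in $\eta$ uniformly in $r$ and $\nu$; then \eqref{estt0} follows from Young's inequality $\|K^0_{\nu,r}*g\|_{L^2_t}\le \|K^0_{\nu,r}\|_{L^1}\|g\|_{L^2}$ together with pulling the $\sup_{r}$ inside once the $L^1$ bound is $r$-uniform. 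The key computation here is to change variables in \eqref{kndef} to see $K^0_{\nu,r}$ as (a piece of) the Fourier transform in $y$ of a compactly supported smooth bump composed with $\sqrt{|y|}$, and to check the tail decay in $\eta$ is summable.

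For the middle piece $T^1_{\nu,r}$, the Bessel function near its turning point $\eta\sim\nu$ is governed by the Airy-type asymptotics, and $m^1_\nu$ is supported on a band of width $\sim\nu^{1/3}$ (or $\sim\nu$, depending on how the partition is set up) around $\eta\sim\nu$. The plan is to use the uniform (Airy) asymptotic expansion of $J_\nu$ to write $m^1_\nu(\rho)$ as a sum of an oscillatory term with controlled phase plus an acceptable error, and then to treat $K^1_{\nu,r}$ as an oscillatory integral in $y$: stationary phase (or a van der Corput estimate) gives a pointwise bound on the kernel decaying in $\eta$ like $(1+|\eta|)^{-N}$ away from a stationary set, with constants uniform in $\nu$ after rescaling $y\mapsto \nu y$ or $y \mapsto r^{-2} y$. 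Again Young's inequality in $t$ plus an $r$-uniform $L^1_\eta$ bound on the kernel closes \eqref{estt1}. I would double-check that the Airy/Bessel asymptotics are genuinely uniform down to $\nu=a$ with $a\ge 0$ arbitrary (not just $\nu\to\infty$), since the lemma claims $C_2$ independent of $k$.

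For the high pieces $T^j_{\nu,r}$ with $j\gg\log\nu$, i.e.\ $\eta\sim 2^j\gg\nu$, the Bessel function is in the oscillatory regime $J_\nu(\rho)\approx \sqrt{2/(\pi\rho)}\cos(\rho-\nu\pi/2-\pi/4) + O(\rho^{-3/2})$. Substituting this asymptotic into \eqref{tjrdefeq}, the operator $T^j_{\nu,r}$ becomes, up to acceptable errors, a half-wave/Schr\"odinger-type propagator localized at frequency $|\xi|^{1/2}\sim 2^j/r$, and the $\sup_r$ maximal estimate with the gain $2^{-j/2}$ is exactly the kind of local smoothing / $TT^*$ estimate one proves by: (a) freezing a dyadic value of $r$, (b) using $TT^*$ to reduce to an oscillatory integral kernel in the $t$ variable with phase $\sim (\eta^2-\tilde\eta^2)$-type and amplitude carrying a factor $\rho^{-1}\sim (2^j/r)^{?}$ giving the $2^{-j/2}$, (c) summing over dyadic $r$ using that the frequency localizations are almost disjoint so the losses in the sup-to-sum step are absorbed by the geometric factor $2^{-j/2}$ (which is why the statement has $\sum_{j\gg\log\nu} 2^{-j/4}<\infty$ in mind). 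The error terms $O(\rho^{-3/2})$ in the Bessel asymptotic are even better and handled the same way or by Young.

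The main obstacle I anticipate is the middle piece \eqref{estt1}, the transition region $\eta\sim\nu$: there the Bessel function is neither a clean power nor a clean oscillation, the natural scale is $\nu^{1/3}$, and one must invoke uniform asymptotic expansions (Airy function / Langer) and carefully track that every constant is independent of $\nu$ including at small $\nu$. A close second obstacle is making the "$\sup_r \le$ sum over dyadic $r$" reduction rigorous without losing the $2^{-j/2}$ in \eqref{esttj} — i.e.\ proving a genuine maximal (in $r$) bound rather than a fixed-$r$ bound, which will require either a square-function argument in $r$ or an interpolation between an $L^1_r$-type and $L^\infty_r$-type bound, exploiting that $T^j_{\nu,r}$ and $T^j_{\nu,r'}$ have essentially disjoint frequency supports when $r,r'$ differ by a large dyadic factor.
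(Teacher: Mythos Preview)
Your plan has the right ingredients (kernel estimates, Bessel asymptotics, $TT^*$), but there is a genuine gap in how you handle the supremum over $r$, and the paper's route for the middle piece is quite different from yours.

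\textbf{The gap.} For $T^0$ you claim that once $\|K^0_{\nu,r}\|_{L^1_\eta}$ is uniform in $r$, Young's inequality plus ``pulling the $\sup_r$ inside'' gives \eqref{estt0}. This is false: a uniform-in-$r$ bound $\sup_r\|K^0_{\nu,r}*g\|_{L^2_t}\le C\|g\|_{L^2}$ does \emph{not} imply the maximal bound $\big\|\sup_r|K^0_{\nu,r}*g|\big\|_{L^2_t}\le C\|g\|_{L^2}$. The paper closes this by exploiting the scaling $K^0_{\nu,r}(\eta)=r^{-2}K^0_{\nu,1}(\eta/r^2)$ and proving that $|K^0_{\nu,1}|$ is dominated by an even, nonnegative, \emph{decreasing} $L^1$ function $\Phi^0_\nu$; then $\sup_{r>0}|T^0_{\nu,r}g(t)|\le \|\Phi^0_\nu\|_{L^1}\,M(g)(t)$ with $M$ the Hardy--Littlewood maximal function, and the maximal inequality finishes \eqref{estt0}. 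The same device resolves your ``sup-to-sum'' worry for $T^j$: after $TT^*$ the paper shows
\[
\Big|\int K^j_{\nu,a}(t-\eta)\overline{K^j_{\nu,b}(t'-\eta)}\,d\eta\Big|\le a^{-2}\Phi_j\!\big(|t-t'|/a^2\big),
\]
with $\Phi_j$ even, decreasing, $\|\Phi_j\|_{L^1}\le C2^{-j/2}$, and --- crucially --- the bound \emph{independent of $b$}. This $b$-independence lets one take $\sup$ over $r(t')$ for free, and the remaining $\sup$ over $a=r(t)$ is then controlled by $M(F)(t)$. No dyadic summation in $r$ is needed.

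\textbf{Different route for the middle piece.} Your plan for $T^1$ (Airy/Langer uniform asymptotics, then a kernel bound and Young) is plausible but delicate for exactly the reasons you flag, and it inherits the same sup-over-$r$ issue. The paper avoids Airy asymptotics entirely. It uses a scaling-invariant Sobolev-type embedding in the $r$ variable: writing $T^1_{\nu,r_0}g(t)$ via Fourier inversion in $r$ and applying Cauchy--Schwarz against the weight $(b+\rho^2 b^{-1})^{-1}$ (whose integral equals $\pi$ for every $b>0$) bounds $\|T^1_{\nu,\cdot}g(t)\|_{L^\infty_r}^2$ by integrals of $|m^1_\nu(r|\xi|^{1/2})\widehat g(\xi)|^2$ and $|(m^1_\nu)'(r|\xi|^{1/2})|\xi|^{1/2}\widehat g(\xi)|^2$ over $(r,\xi)$. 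After Plancherel in $t$ and the change of variable $y=r|\xi|^{1/2}$, everything reduces to the uniform bounds $\int|m^1_\nu|^2\le C$ and $\int|(m^1_\nu)'|^2\le C$, which the appendix proves by stationary phase on the integral representation of $J_\nu$ near $r\sim\nu$ (no Airy functions). A dyadic decomposition of $\widehat g$, matched to a dyadic decomposition of the range of the maximizing $r_0(t)$, then handles general $g$, since on each $r$-block only $O(1)$ frequency blocks of $\widehat g$ contribute. This is both more elementary and more robust in $\nu$ than the Airy route you propose.
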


Notice $\Vert v \Vert^2_{L^2_t(L^\infty_rL_\theta)}$ can be bounded by the sum of the left hand side terms in Lemma\ref{3bounds} and the right hand side terms are summable. Thus, Lemma\ref{lmkieq}  follows.

We will refer to these three cases as low frequency, middle frequency, and high frequency respectively. We will prove inequalities (\ref{estt0}), (\ref{estt1}), and (\ref{esttj}) in the following sections.
\section{Estimates  for  Low Frequency}

 Our strategy is to estimate the kernel defined in (\ref{kndef}) and apply Hardy-Littlewood  maximal inequality in this case.
By changing variable $z:=r^2y$ in (\ref{kndef}), we can write
\begin{equation}
K^0_{\nu,r}(\eta)=\frac1{r^2}K^0_{\nu,1}(\frac{\eta}{r^2}),
\end{equation}
and therefore we have
\begin{equation}
\Vert K^0_{\nu,r}(\eta)\Vert_{L_1}=\Vert K^0_{\nu,1}(\eta)\Vert_{L_1}.
\end{equation}
We will prove the following estimate.
\begin{lem}
\label{lmk0} The kernel  $K^0_{\nu,1}(\eta)$ is bounded as follows,
\begin{equation}\label{k0est}
 |K^0_{\nu,1}(\eta)|\leq \Phi^0_\nu(\eta),
\end{equation}
where $\Phi^0_\nu$ is an even nonnegative  decaying $L^1$ function defined as follows.
\begin{equation}\label{phi0nu}\Phi^0_\nu= \left\{\begin{array}{lr} c(1+|\eta|)^{-(1+\nu/2)}& when \ 0 <\nu \leq 2 \\ C(\nu)(1+|\eta|)^{-2} &when\   2 < \nu ,\end{array}\right.
\end{equation}
where $C(\nu)$ is uniformly  bounded.
 \end{lem}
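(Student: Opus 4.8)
The plan is to recognize $K^0_{\nu,1}$ as a (cosine) Fourier transform and to exploit two features of $m_\nu^0$: it vanishes to order exactly $\nu$ at the origin, and it is supported strictly below the turning point $\rho\sim\nu$ of $J_\nu$. Since the integrand in (\ref{kndef}) is even in $y$, we have $K^0_{\nu,1}(\eta)=\int_0^\infty m_\nu^0(\sqrt y)\cos(\eta y)\,dy$, so $K^0_{\nu,1}$ is real and even in $\eta$; hence it suffices to bound it for $\eta\ge 1$, the range $0\le\eta\le 1$ being covered by the crude estimate $|K^0_{\nu,1}(\eta)|\le\int_0^{\nu^2/2}|J_\nu(\sqrt y)|\,dy\le\frac{2^{-\nu}}{\Gamma(\nu+1)}\int_0^{\nu^2/2}y^{\nu/2}\,dy$, using the pointwise bound $|J_\nu(\rho)|\le(\rho/2)^\nu/\Gamma(\nu+1)$ read off from (\ref{besseldef}); by Stirling this is $O(1)$ when $\nu\le 2$ and exponentially small in $\nu$ when $\nu>2$. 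The structural fact I extract from (\ref{besseldef}) is that $J_\nu(\rho)=\rho^\nu g_\nu(\rho)$ with $g_\nu$ an even entire function satisfying $|g_\nu^{(k)}(\rho)|\le 2^{-\nu}/\Gamma(\nu+1)$ for every $k$; consequently $m_\nu^0(\sqrt y)=y^{\nu/2}G_\nu(y)$, where $G_\nu$ is smooth on $[0,\infty)$, supported in $[0,\nu^2/2]$, equals $g_\nu(\sqrt{\,\cdot\,})$ near $0$, and whose scale-normalized derivatives $(2^{-l})^k\sup_{y\sim 2^{-l}}|G_\nu^{(k)}(y)|$ are bounded uniformly in $\nu$ on any bounded $\nu$-interval.

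For $0<\nu\le 2$ I would prove the decay $|K^0_{\nu,1}(\eta)|\lesssim|\eta|^{-(1+\nu/2)}$ — the decay rate of the Fourier transform of $y_+^{\nu/2}$ — by a dyadic decomposition $y\sim 2^{-l}$. On the pieces with $2^{-l}|\eta|\ge 1$ I integrate by parts $N=3$ times in $y$: each step gains $|\eta|^{-1}$ and costs one $y$-derivative, which on scale $2^{-l}$ costs $2^l$, so using $|m_\nu^0(\sqrt y)|\lesssim 2^{-l\nu/2}$ together with the scale-normalized derivative bounds such a piece contributes $\lesssim 2^{-l(1+\nu/2)}(2^{-l}|\eta|)^{-3}$. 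On the remaining pieces ($2^{-l}|\eta|<1$) I use the trivial bound $\int_{y\sim 2^{-l}}y^{\nu/2}|G_\nu|\,dy\lesssim 2^{-l(1+\nu/2)}$. Both geometric series sum to $O(|\eta|^{-(1+\nu/2)})$, the one from integration by parts converging because $3>1+\nu/2$, uniformly for $\nu\le 2$ (indeed for $\nu\le 3$). The only point needing care is that the three integrations by parts are legitimate with $\nu$-uniform constants; this follows by differentiating $J_\nu(\sqrt y)=(\sqrt y)^\nu g_\nu(\sqrt y)$ and the frequency cutoff by the chain rule: the cutoff is active only on the single scale $2^{-l}\sim\nu^2$, where it is harmless, and the bounds $|g_\nu^{(k)}|\le 2^{-\nu}/\Gamma(\nu+1)$ keep everything controlled.

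For $\nu>2$ only the weaker decay $|\eta|^{-2}$ is claimed. Here $m_\nu^0$ lives on $\sqrt y<\nu/\sqrt 2$, strictly below the turning point, so by (\ref{besseldef}) and Stirling both $J_\nu$ and its derivatives are exponentially small in $\nu$ there; hence for $\nu\ge 3$ the function $y\mapsto m_\nu^0(\sqrt y)$ has its first two $y$-derivatives in $L^1$ with exponentially small norm, and two integrations by parts in $y$ give $|K^0_{\nu,1}(\eta)|\le C(\nu)|\eta|^{-2}$ with $C(\nu)$ exponentially small. On the overlap $2<\nu\le 3$ I invoke instead the dyadic estimate of the previous paragraph, noting $|\eta|^{-(1+\nu/2)}\le|\eta|^{-2}$ for $|\eta|\ge 1$ when $\nu\ge 2$; together with the $\eta\le 1$ estimate this yields $|K^0_{\nu,1}(\eta)|\le C(\nu)(1+|\eta|)^{-2}$ with $\sup_{\nu>2}C(\nu)<\infty$. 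Finally $\Phi^0_\nu$ in (\ref{phi0nu}) is manifestly even and nonnegative, monotonically decreasing in $|\eta|$, and in $L^1(\mathbb R)$ because $1+\nu/2$ and $2$ both exceed $1$; this is exactly the form needed to run the Hardy--Littlewood maximal estimate in the next section.

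The main obstacle is not any individual estimate but keeping every constant uniform in $\nu$: one must track the $\nu$-dependence of the $\Gamma$-factors from (\ref{besseldef}), of the dyadic geometric series, and of the derivatives of the frequency cutoff in $m_\nu^0$, and verify that none of them degenerates. The two delicate spots are the junction $\nu\approx 2$ between the two cases of (\ref{phi0nu}), which I handle by letting the sub-arguments overlap on $2\le\nu\le 3$, and the large-$\nu$ regime, where the decisive input is the exponential smallness of $J_\nu$ below its turning point.
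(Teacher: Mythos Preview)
Your argument is correct and rests on the same ingredients as the paper's: the even cosine-transform representation of $K^0_{\nu,1}$, the factorisation $m_\nu^0(\sqrt y)=y^{\nu/2}G_\nu(y)$ coming from the power-series structure of $J_\nu$, integration by parts in $y$ to gain powers of $|\eta|^{-1}$, and Stirling's formula to control the $\nu$-dependence for large $\nu$. The technical packaging, however, is different. For $0<\nu\le 2$ the paper integrates by parts once, then splits the resulting integral at the single scale $y=|\eta|^{-1}$ and integrates by parts once more on the far piece, explicitly balancing the three contributions $I_1$, $B_2$, $I_2$; you instead run a full dyadic decomposition in $y$ and integrate by parts three times on each piece, the dyadic cutoffs absorbing all boundary terms and the two geometric series doing the balancing automatically. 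Your version is slightly more robust---the same computation works verbatim for $\nu<4$, which is why you get the overlap $2\le\nu\le 3$ for free---while the paper's hands-on splitting is shorter but requires tracking the explicit boundary term. For $\nu>2$ both proofs do the same thing: two integrations by parts, legitimate because $y^{\nu/2-2}\in L^1_{\mathrm{loc}}$, together with the exponential smallness of $J_\nu$ on $[0,\nu/\sqrt2]$ via Stirling to make $C(\nu)$ uniformly bounded.
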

We can see $\Vert\Phi^0_\nu\Vert_{L^1}$ is finite for every $\nu$ from a direct calculation. Since $C(\nu)$ is uniformly bounded, $\Vert \Phi^0_\nu \Vert_{L^1}$ is uniformly bounded when $\nu >2$.
 For $0<\nu \leq 2$, $\Vert \Phi^0_\nu \Vert_{L^1}=4/\nu$.
However, since $\nu(k)^2=a^2+k^2$ are discrete, we can find a
universal $L_1$ bound for given $a\neq0$. Since $\Phi^0_\nu$ is an even nonnegative decaying function, we can use the property of
approximate identity and obtain.
\begin{equation}
\sup_{r>0} T^0_{\nu,r} [g](t) \leq \Vert \Phi^0_\nu \Vert_{L^1} M[g](t)
,\end{equation}
where $M[g](t)$ is  Hardy-Littlewood maximal function of $g$ at $t$,
defined as follows.
\begin{equation}
 M(g)(t)=\sup_{r>0}\frac1{|I(t,r)|}{\int_{I(t,r)}|g|dx}
,\end{equation} where $I(t,r)=(t-r,t+r)$.
Finally, we apply the Hardy-Littlewood  maximal inequality
\begin{equation}
\Vert M(F)\Vert_{L_p} \leq C(p) \Vert F \Vert_{L_p}\ \ \ \  1<p<\infty
\label{HLMInq}\end{equation}
to finish the proof.

 We will prove Lemma(\ref{lmk0}) case by case as presented in (\ref{phi0nu}).
\begin{proof} We need to prove $K^0_{\nu,1}(\eta)$ is bounded and decays with the the power advertised in (\ref{phi0nu}).
We first prove the decay of the tail.

Because $m^0_\nu$ is even, we have
\begin{equation} K_{\nu,1}^0(\eta)=\frac12\int_{\mathbb{R}} m_\nu^0
(\sqrt{|y|})e^{i\eta y}dy= \int_0^\infty m_\nu^0
(\sqrt{y})\cos (\eta y)dy\end{equation}
Integrate by parts to obtain
\begin{equation} K_{\nu,1}^0(\eta)= -\frac1{2\eta}\int_0^\infty{m^0_\nu}'
(y^{1/2})y^{-1/2}\sin (\eta y)dy\label{K0IBYP}\end{equation}
Differentiating  the expression of the Bessel function in (\ref{besseldef}), we can find the following recursive relation for Bessel functions .
\begin{equation}
J'_\nu(r)=\nu r^{-1}J_\nu(r)-J_{\nu+1}(r) \label{besid}
\end{equation}
From the definition of Bessel function (\ref{besseldef}), we can see
\begin{equation}
J_\nu(r)\sim \frac1{\Gamma(\nu+1)} (\frac{r}2)^\nu \ if\  r< \sqrt{\nu+1} . \label{asym}
\end{equation}
 Moreover, for all $r$ the following upper bound is true
\begin{equation}
J_\nu(r)\leq \frac c{\Gamma(\nu+1)} (\frac{r}2)^\nu. \label{jnuineq}
\end{equation}
Combining (\ref{besid}) and (\ref{asym}), the integrant in (\ref{K0IBYP})  behaves like $\sim\nu y^{\frac\nu2-1}$, when $y \ll 1+\nu$.

We will examine various cases of the parameter $\nu$.
\begin{itemize}
\item{Case 1: $0<\nu\leq 2$}
\end{itemize}
We break the integral into two parts, from $0$ to $|\eta|^{-\alpha}$ and the rest and integrate by parts the latter, i.e. we write
$ K_{\nu,1}^0(\eta)=I_1+B_2+I_2$, where
\begin{eqnarray*} I_1=-\frac1{2\eta}\int_0^{|\eta|^{-\alpha}} {m^0_\nu}'
(y^{1/2})y^{-1/2}\sin (\eta y)dy\\ B_2=-\frac1{2\eta^2}\cos(\eta |\eta|^{-\alpha}){m_\nu^0}'(|\eta|^{-\alpha/2})|\eta|^{\alpha/2}
\\I_2= -\frac1{4\eta^2}\int_{|\eta|^{-\alpha}}^\infty {m_\nu^0}^{\prime\prime}
(y^{1/2})y^{-1}\cos(\eta y)- {m^0_\nu}'
(y^{1/2})y^{-3/2}\cos (\eta y)dy,\end{eqnarray*} where $\alpha$  is a parameter to be determined later.

Estimate $I_1$ using the equation (\ref{asym}), we have $|I_1|\sim |\eta|^{-1-\alpha\frac\nu2}$.
 Taking the absolute value,  we have $|B_2| \sim \nu |\eta|^{-\alpha\frac\nu2+\alpha-2}$.
For $I_2$, we use the fact that Bessel function is the solution of the following differential equation
\begin{equation}
J_\nu^{\prime\prime}(r)+\frac1r J_\nu^\prime(r)+(1-\frac{\nu^2}{r^2})J_\nu(r)=0
.\end{equation}
Combining with the identity(\ref{besid}), we have
\begin{equation}
J_\nu^{\prime\prime}(r)=\frac1rJ_{\nu+1}(r)-(1+\frac\nu{r^2}-\frac{\nu^2}{r^2})J_\nu(r).\end{equation}
Using (\ref{jnuineq}), we can estimate the integrant in $I_2$ by  $c\nu(\nu-2)y^{\frac\nu2-2}$. Thus,  we have $|I_2| \leq c\nu |\eta|^{-\alpha\frac\nu2+\alpha-2}$.
To balance  the contribution from $I_1$, $B_2$, and $I_2$, we choose $\alpha=1$. Thus, we have  $K_{\nu,1}^0(\eta)< c\nu^{-(1+\frac\nu2)}$.
\begin{itemize}

\item{Case 2: $\nu<2$}
\end{itemize}
 We do not split the integral in this case.  We can integrate by parts twice without introducing boundary terms and obtain
\begin{eqnarray*}K^0_1(\eta)= -\frac1{4\eta^2}\int_0^\infty \left({m^0_\nu}^{\prime\prime}
(y^{1/2})y^{-1}\cos(\eta y)- {m^0_\nu}'
(y^{1/2})y^{-3/2}\cos (\eta y)\right)dy
.\end{eqnarray*}
Since $m_\nu^0$ is supported within $[0,\nu/\sqrt{2})$, the integral is bounded by ${\eta^{-2}}$ multiplied by a constant namely
 $C(\nu)=c(\nu-2){(\Gamma(\nu+1))^{-1}2^{\frac{-3\nu}2}}$, where $P$ is a polynomial with finite degree. Using the Stirling's formula
\begin{equation}
\Gamma(z)=\sqrt{\frac{2\pi}z}\left(\frac{z}e\right)^z\left(1+O(\frac1z)\right)\label{stirling}
,\end{equation}and observing that  $e<2^{3/2}$, we can see
 that $C(\nu)$ has a bound independent of $\nu$.

Now, we took care of the tail. The remaining task is to prove that $K_{\nu,1}^0(\eta)$ is bounded.  We take absolute value of the integrant  in (\ref{K0IBYP})
\begin{equation}
|K_{\nu,1}^0(\eta)| \leq \int |m^0_\nu(\sqrt{|y|})| dy.
\end{equation}
Since $m^0_\nu$ is a bounded function with a compact  support, we proved $K_{\nu,1}^1(\eta)$ is bounded for fixed $\nu$.  Furthermore if we apply (\ref{jnuineq}), we have
 \begin{equation}
|K_{\nu,1}^0(\eta)| \leq c\frac{\nu^3{\nu}^{\nu}}{\Gamma(\nu+1)2^{\frac{3\nu}2}}.
\end{equation}
Using the Stirling's formula (\ref{stirling}) again, we can show that there is a bound independent of $\nu$.
\end{proof}

\section{Estimates for Middle Frequency }
The goal is to prove the inequality (\ref{estt1}), namely
\begin{equation*}
\Vert T^1_{\nu,r}(g)(t)\Vert_{L^2_tL_r^\infty}\leq C \Vert g\Vert_{L^2}.
\end{equation*}
First, we want to estimate $L_r^\infty$  norm for fixed $t$.
Recall the equation (\ref{tjrdefeq}), we have
\begin{equation}
T^1_{\nu,r}(g)(t)=\frac1{\sqrt{2\pi}}\int m_\nu^1(r|\xi|^\frac12)\widehat{g}(\xi)e^{i\xi t}d\xi
\end{equation}
Since composing Fourier transform with inverse Fourier transform will form identity map , we have
\begin{equation}
T^1_{\nu,r_0}(g)(t)=\frac1{\sqrt{2\pi}^3}\int\int\int
m_\nu^1(r|\xi|^\frac12)\widehat{g}(\xi)e^{i\xi t}d\xi
\  e^{ir\rho} dr \ e^{-i\rho r_0} d\rho
.\label{inverT1r0}
\end{equation}

Using smooth dyadic decomposition, we write $\widehat{g}(\xi)=\sum \widehat{g_n}(\xi)$ where  $\widehat{g_n}$ is supported on $ (-2^{n+1},-2^{n-1})\bigcup(2^{n-1},2^{n+1})$. We will prove the following lemma.
\begin{lem}\label{lammagn}For $g_n \in L^2(\mathbb{R})$ such that $\widehat{g_n}$ supported on $ (-2^{n+1},-2^{n-1})\bigcup(2^{n-1},2^{n+1})$, we have the estimate
\begin{equation}
\Vert T^1_{r}(g_n)(t)\Vert_{L^2_tL_r^\infty}\leq C \Vert g_n\Vert_{L^2},\label{t1}
\end{equation} where $C$ is independent of $n$.
\end{lem}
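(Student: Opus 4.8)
The plan is to prove the single-mode estimate $(\ref{t1})$ by combining the elementary one-dimensional Sobolev embedding $\|F\|_{L^\infty}^{2}\le 2\|F\|_{L^{2}}\|F'\|_{L^{2}}$ in the radial variable with Plancherel's theorem in $t$, and then exploiting the two frequency localizations to render the resulting bound scale invariant. Write $F(r,t)=T^{1}_{r}(g_n)(t)$. First I would observe that, since $m^{1}_{\nu}$ is supported where its argument is comparable to $\nu$ and $\widehat{g_n}$ is supported where $|\xi|\sim 2^{n}$, the integrand in $(\ref{tjrdefeq})$ vanishes unless $r\sim \nu 2^{-n/2}$; thus $r\mapsto F(r,t)$ is supported in a bounded interval bounded away from the origin, so that $F(0^{+},t)=0$. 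The fundamental theorem of calculus followed by Cauchy--Schwarz in $r$ then gives, for each $t$,
\[
\sup_{r>0}|F(r,t)|^{2}\le 2\int_{0}^{\infty}|F(r,t)|\,|\partial_{r}F(r,t)|\,dr\le 2\,\|F(\cdot,t)\|_{L^{2}_{r}}\,\|\partial_{r}F(\cdot,t)\|_{L^{2}_{r}},
\]
and, integrating in $t$ and applying Cauchy--Schwarz once more,
\[
\int_{\mathbb{R}}\sup_{r>0}|F(r,t)|^{2}\,dt\le 2\Big(\int_{\mathbb{R}}\!\!\int_{0}^{\infty}|F|^{2}\,dr\,dt\Big)^{1/2}\Big(\int_{\mathbb{R}}\!\!\int_{0}^{\infty}|\partial_{r}F|^{2}\,dr\,dt\Big)^{1/2}.
\]

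Next I would evaluate these two double integrals. For fixed $r$, Plancherel in $t$ turns $\int_{\mathbb{R}}|F(r,t)|^{2}\,dt$ into $\int|m^{1}_{\nu}(r|\xi|^{1/2})|^{2}\,|\widehat{g_n}(\xi)|^{2}\,d\xi$, and, since $\partial_{r}[m^{1}_{\nu}(r|\xi|^{1/2})]=|\xi|^{1/2}(m^{1}_{\nu})'(r|\xi|^{1/2})$, it turns $\int_{\mathbb{R}}|\partial_{r}F(r,t)|^{2}\,dt$ into $\int|\xi|\,|(m^{1}_{\nu})'(r|\xi|^{1/2})|^{2}\,|\widehat{g_n}(\xi)|^{2}\,d\xi$. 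Performing the $r$-integration via the substitution $u=r|\xi|^{1/2}$ extracts a factor $|\xi|^{-1/2}$ in each case, so
\[
\int_{\mathbb{R}}\!\!\int_{0}^{\infty}|F|^{2}\,dr\,dt=\|m^{1}_{\nu}\|_{L^{2}(du)}^{2}\int|\xi|^{-1/2}|\widehat{g_n}(\xi)|^{2}\,d\xi,\qquad \int_{\mathbb{R}}\!\!\int_{0}^{\infty}|\partial_{r}F|^{2}\,dr\,dt=\|(m^{1}_{\nu})'\|_{L^{2}(du)}^{2}\int|\xi|^{1/2}|\widehat{g_n}(\xi)|^{2}\,d\xi.
\]
Because $|\xi|\sim 2^{n}$ on $\operatorname{supp}\widehat{g_n}$, the weights $|\xi|^{\mp 1/2}$ are $\sim 2^{\mp n/2}$, so these powers of $2^{n}$ cancel in the geometric mean and one is left with
\[
\int_{\mathbb{R}}\sup_{r>0}|T^{1}_{r}(g_n)(t)|^{2}\,dt\lesssim \|m^{1}_{\nu}\|_{L^{2}(du)}\,\|(m^{1}_{\nu})'\|_{L^{2}(du)}\,\|g_n\|_{L^{2}}^{2}.
\]

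It remains to show that $\|m^{1}_{\nu}\|_{L^{2}(du)}$ and $\|(m^{1}_{\nu})'\|_{L^{2}(du)}$ are bounded uniformly in $\nu$ (they are automatically independent of $n$); this is the main point, and where I expect the real work to lie. For it I would use the classical turning-point estimates for the Bessel function on $\operatorname{supp} m^{1}_{\nu}$, namely $|J_\nu(u)|\lesssim \min\{(u^{2}-\nu^{2})_{+}^{-1/4},\,\nu^{-1/3}\}$ and $|J_\nu'(u)|\lesssim \nu^{-1/2}$, together with the fact that a derivative landing on the smooth cutoff defining $m^{1}_{\nu}$ costs only a factor $\nu^{-1}$. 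Splitting $\operatorname{supp} m^{1}_{\nu}$ --- an interval of length $\sim \nu$ containing $u=\nu$ --- into the Airy zone $|u-\nu|\lesssim \nu^{1/3}$ and the oscillatory zone, the Airy zone contributes $\lesssim \nu^{1/3}\cdot\nu^{-2/3}=\nu^{-1/3}$ to $\|m^{1}_{\nu}\|_{L^{2}(du)}^{2}$ while the oscillatory zone contributes $\lesssim \int^{C\nu}(u^{2}-\nu^{2})^{-1/2}\,du=O(1)$; similarly $\|(m^{1}_{\nu})'\|_{L^{2}(du)}^{2}\lesssim \nu\cdot(\nu^{-1/2})^{2}=O(1)$. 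With these uniform bounds the constant in $(\ref{t1})$ depends on neither $n$ nor $\nu$, and the general case $g_n\in L^{2}$ follows by the density argument already invoked.

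The step I expect to be the genuine obstacle is precisely this last uniformity. The crude estimate $|J_\nu|\lesssim \nu^{-1/3}$ alone only gives $\|m^{1}_{\nu}\|_{L^{2}(du)}^{2}\lesssim \nu\cdot\nu^{-2/3}=\nu^{1/3}$, which is too weak, so one genuinely must use the sharper $(u^{2}-\nu^{2})^{-1/4}$ decay of $J_\nu$ away from the turning point; one must also verify that the transition region between the Airy and oscillatory regimes, and the terms in which the derivative falls on the cutoff, do not reintroduce a growing power of $\nu$.
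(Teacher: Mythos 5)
Your argument is essentially the same as the paper's, just implemented on the physical side rather than the Fourier side. You realize the one‑dimensional Sobolev embedding $\sup_r|F(r,t)|^2\le 2\|F(\cdot,t)\|_{L^2_r}\|\partial_rF(\cdot,t)\|_{L^2_r}$ directly by the fundamental theorem of calculus and Cauchy--Schwarz (noting, correctly, that the support condition on $m^1_\nu$ combined with $|\xi|\sim 2^n$ forces $F(\cdot,t)$ to vanish near $r=0$), and then apply Cauchy--Schwarz once more in $t$. The paper accomplishes exactly this by multiplying and dividing the Fourier‑inversion integral by $\sqrt{b+\rho^2b^{-1}}$, applying Hölder, and later choosing $b=2^{n/2}$; by the arithmetic--geometric mean inequality the paper's weighted sum at the optimal $b$ coincides with your geometric mean. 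Both routes then apply Plancherel in $t$, substitute $u=r|\xi|^{1/2}$ to separate the $\xi$‑ and $u$‑integrals, use $|\xi|\sim 2^n$ on $\operatorname{supp}\widehat{g_n}$ to cancel the two powers $|\xi|^{\mp1/2}$, and reduce everything to the uniform bounds $\|m^1_\nu\|_{L^2(du)},\ \|(m^1_\nu)'\|_{L^2(du)}=O(1)$, which is precisely the content of the paper's Lemma \ref{jnunu}. So the difference is notational rather than conceptual, and you have correctly identified the genuine bottleneck.

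There is, however, a small gap in the way you state the Bessel input. The bound $|J_\nu(u)|\lesssim\min\{(u^2-\nu^2)_+^{-1/4},\,\nu^{-1/3}\}$ degenerates to just $\nu^{-1/3}$ throughout the region $\nu/2\le u<\nu-\nu^{1/3}$, since $(u^2-\nu^2)_+=0$ there. Over that interval of length $\sim\nu$ this contributes $\sim\nu\cdot\nu^{-2/3}=\nu^{1/3}$ to $\|m^1_\nu\|_{L^2(du)}^2$, which is exactly the growth you flagged as unacceptable. Your decomposition into an Airy zone $|u-\nu|\lesssim\nu^{1/3}$ and an ``oscillatory zone'' implicitly treats only the right side $u>\nu+\nu^{1/3}$ in the latter; the left side $u<\nu-\nu^{1/3}$ is not covered by the estimate as written. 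The fix is to use the two‑sided turning‑point estimate $J_\nu(u)\lesssim\nu^{-1/3}(1+\nu^{-1/3}|u-\nu|)^{-1/4}$ (this is inequality (\ref{Corput1}) in Lemma \ref{jnunu}), or to invoke the even stronger exponential decay of $J_\nu(u)$ for $u<\nu(1-\delta)$; either shows that the sub‑turning‑point region contributes less than the Airy zone, after which your computation closes as intended.
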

\begin{proof}
  On the right hand side of  (\ref{inverT1r0}), we multiply and divide by $ \sqrt{b+\rho^2b^{-1}}$, where $b>0$ is a parameter to be chosen later.
We change the order of integration, and apply
Holder's inequality to obtain \begin{eqnarray}\label{t1exp}
 \ \ \ \ \ \ \ \ \ \ \ \ \ \ \ \ \ |T^1_{r_0}(g_n)(t)  |   \leq C\left( \int \frac {|e^{-i\rho r_0(t)}|^2}{(b+\rho^2b^{-1})}d \rho\right)^\frac12\cdot \\ \notag
\left(\int\left|\int\int
m_\nu^1(r  | \xi|^\frac12)\widehat{g_n}(\xi)e^{i\xi t} e^{ir\rho}d\xi dr\right|^2(b+\rho^2b^{-1})d\rho\right)^{\frac12}
\end{eqnarray}
Note that the first integral on the right hand side is $\pi$ for any $b>0$.  Thus, equation(\ref{t1exp}) reduces to
\begin{eqnarray}
 &\ &   \label{T1gn}  \ \ \  \Vert T^1_{r_0}(g_n)(t)\Vert_{L^\infty_{r_0}}\leq \\
 &C& \left(\int\left|\int\int
m_\nu^1(r|\xi|^\frac12)\widehat{g_n}(\xi)e^{i\xi t}d\xi  \notag e^{ir\rho}dr\right|^2(b+\rho^2b^{-1})d\rho\right)^{\frac12}
\end{eqnarray}
We name the integral on the right hand side of (\ref{T1gn}) as $l(t)$. We distribute the sum $(b +b^{-1}\rho^2)$ and write $l^2(t)=l_1(t)+l_2(t)$, where

\begin{eqnarray*}
l_1(t)=\int\left|\int\int b^{\frac12}
m_\nu^1(r|\xi|^\frac12)\widehat{g_n}(\xi)e^{i\xi t}d\xi e^{ir\rho}dr\right|^2d\rho
,\\
l_2(t)=\int\left|\int\int  b^{-\frac12}
\rho m_\nu^1(r|\xi|^\frac12)\widehat{g_n}(\xi)e^{i\xi t}d\xi e^{ir\rho}dr\right|^2d\rho
.\end{eqnarray*}
For $l_2(t)$, we
 integrate by parts with respect to $r$ to remove $\rho$ and obtain,
\begin{equation*}
l_2(t)=\int\left|\int\int  b^{-\frac12}
|\xi|^{\frac12}(m_\nu^1)'(r|\xi|^\frac12)\widehat{g_n}(\xi)e^{i\xi t}d\xi e^{ir\rho}dr\right|^2d\rho
.\end{equation*}
Using the Plancherel's theorem, we have
\begin{equation}
l_1(t)=\int\left|\int b^{\frac12}
m_\nu^1(r|\xi|^\frac12)\widehat{g_n}(\xi)e^{i\xi t}d\xi\right|^2dr
\end{equation}
\begin{equation}
l_2(t)=\int\left|\int b^{-\frac12}|\xi|^{\frac12}
(m_\nu^1)'(r|\xi|^\frac12)\widehat{g_n}(\xi)e^{i\xi t}d\xi \right|^2dr
.\end{equation}

  We square both sides of (\ref{T1gn}) and integrate overt $t$. Then, we change the order of integration with respect to $r,\ t$, and apply Plancherel's theorem again to obtain
\begin{eqnarray}
\Vert T^1_{r_0}(g_n)(t)\Vert_{L^2_tL_{r_0}^\infty}^2\leq C\int\int\left| b^{\frac12}
m_\nu^1(r|\xi|^\frac12)\widehat{g_n}(\xi)\right|^2 d\xi dr\\ +C\int\int\notag \left| b^{-\frac12}|\xi|^{\frac12}
m_\nu^1(r|\xi|^\frac12)\widehat{g_n}(\xi)\right|^2d\xi dr
.\end{eqnarray}
We change the variable $y=r|\xi|^{\frac12}$. We have
\begin{equation}
\Vert T^1_{r_0}(g_n)(t)\Vert_{L^2_tL_{r_0}^\infty}^2\leq C\int\left(\int \frac{b}{|\xi|^\frac{1}{2}}
|m_\nu^1(y)|^2+\frac{|\xi|^\frac{1}{2}}b |{m'_1}(y)|^2dy\right)|\widehat{g_n}(\xi)|^2d\xi
.\end{equation}
Use Lemma(\ref{jnunu})(see appendix) which implies
\begin{equation}
\begin{array}{cc}\int|
m_\nu^1(y)
 |^2 dy<C ,&|\int
{(m_\nu^1)}'(y)
   |^2dy<C.
\end{array}\end{equation}
Recall that the $\widehat{g_n}$ is supported on  $(-2^{n+1},-2^{n-1})\bigcup(2^{n-1},2^{n+1})$.
By choosing $b=2^{\frac n2}$, we complete the proof.
\end{proof}
 We proved (\ref{estt1}) for function has bounded support in Fourier domain described above. Now we are going to discuss the general case.
\begin{proof}{(\ref{estt1})} Suppose
$r_0(t)$ realizes at least half of the supremun at every $t$. Then,
it is enough to prove the inequality
\begin{equation} \label{r0in}
\int|T^1_{\nu,r_0(t)}(g)(t)|^2dt\leq C\Vert g \Vert_{L_2}^2.
\end{equation} We will prove (\ref{r0in}) for an arbitrary function $r_0(t)$. We dyadically decompose the range of
$r_0(t)$. The corresponding domains are defined as follows.
\begin{equation}
I_k=\{t|\ 2^k<r_0(t)\leq2^{k+1}\}\label{iqidex1}
\end{equation}
Since $m_\nu^1$ is supported on $(\nu/2,2\nu)$. We have
\begin{equation}
\frac{\nu}{2} < r_0|\xi|^{\frac12}< 2\nu.\label{iqidex2}
\end{equation} On $I_k$, by definition we have $2^{k}< r_0(t)\leq 2^{k+1}$. Combining (\ref{iqidex1}) and (\ref{iqidex2}), the integrant in the expression
(\ref{inverT1r0})
is nonzero only when
\begin{equation}
2\log_2\nu-2k-4<\log_2|\xi|<2\log_2\nu-2k+2 .\end{equation} As a
result, there are only 8 components in the dyadic decomposition in $\{\widehat{g}_n\}$ involved. When $t\in I_k $, we can rewrite (\ref{inverT1r0})
\begin{equation}
T^1_{\nu,r_0(t)}(g)(t)=\frac1{\sqrt{2\pi}}\int m_\nu^1(r_0|\xi|^\frac12)\sum_{n=n_0(k)}^{n_0+7}\widehat{g_n}(\xi)e^{i\xi t}d\xi
=\sum_{n=n_0(k)}^{n_0+7}T^1_{\nu,r_0(t)}(g_n)(t),\end{equation}
where $n_0(k)=\lfloor 2\log_2\nu-2k-4\rfloor$. Thus, use Cauchy-Schwartz inequality in finite sum to obtain
\begin{equation}
|T^1_{\nu,r_0(t)}(g)(t)|^2=\left|\sum_{n=n_0(k)}^{n_0+7}T^1_{\nu,r_0(t)}(g_n)(t)\right|^2\leq 8\sum_{n=n_0(k)}^{n_0+7}\left|T^1_{\nu,r_0(t)}(g_n)(t)\right|^2
.\end{equation}
Combine the above with the Lemma(\ref{lammagn}), we have
\begin{equation}
\int_{I_k}|T^1_{\nu,r_0(t)}(g)(t)|^2dt\leq C\sum_{n=n_0(k)}^{n_0+7}\Vert g_n \Vert_{L_2}^2.
\end{equation}
We sum  over $k$.
\begin{equation}
\int|T^1_{\nu,r_0(t)}(g)(t)|^2dt\leq C\sum_{k\in\mathbb{Z}}\sum_{n=n_0(k)}^{n_0+7}\Vert g_n \Vert_{L_2}^2.
\end{equation}
Note when we increase from $k$ to $k+1$, $n_0$ increases by $2$. As a result, every $n$ only appears four times. Thus
\begin{equation}
\int|T^1_{\nu,r_0(t)}(g)(t)|^2dt\leq 4C\sum_{n\in\mathbb{Z}}\Vert g_n \Vert_{L_2}^2\leq 4C\Vert g \Vert_2^2.
\end{equation}
This completes the proof.
\end{proof}
\section{Estimates for High Frequency}
The goal is to prove (\ref{esttj}), which is equivalent to
\begin{equation}
  \left\Vert\int_{\mathbb{R}}K_{r(t)}^j(t-\eta)g(\eta)d\eta\right\Vert_{L^2_t}\leq C2^{-\frac14j}\Vert g(y)\Vert_{L^2}
,\label{Kjest0}\end{equation}
 for an arbitrary function $r(t)$.
Using the $T^*T$ argument, we have the following lemma.

\begin{lem}
The following three inequalities are equivalent.
\begin{equation}
  \left\Vert\int_{\mathbb{R}}K_{\nu,r(t)}^j(t-\eta)g(\eta)d\eta\right\Vert_{L^2_t}\leq C2^{-\frac14j}\Vert g(y)\Vert_{L^2}
,\ \ \forall g \in L^2(\mathbb{R}^1)\label{Kjest}\end{equation}
\begin{eqnarray}
\left\Vert \int_{\mathbb{R}}K_{\nu,r(t)}^j(t-\eta)F(t) dt\right \Vert_{L^2}  \leq C
2^{-\frac14j}\Vert F \Vert _{L^2} \ \ \ \forall F \in L^2(\mathbb{R}^1) \label{ttmid}  \end{eqnarray}
\begin{eqnarray}\label{ttest2}
\left\Vert
\int_{\mathbb{R}}\int_{\mathbb{R}}K^j_{\nu, r(t)}(t-\eta)\overline{K^j_{\nu,r(t')}(t'-\eta)}d\eta
F(t')dt' \right\Vert_{L^2}  \\ \notag \leq C2^{-\frac12j}||F||_{L^2} ,\ \forall F \in L^2(\mathbb{R}^1)\end{eqnarray}
\end{lem}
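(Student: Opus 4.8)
The plan is to run the standard $T^{*}T$ argument and keep careful track of the exponent of $2^{-j}$. Write $S=S^{j}_{\nu,r}$ for the linear operator acting on a fixed dense class of test functions (Schwartz functions, say) by
\[
(Sg)(t)=\int_{\mathbb{R}}K^{j}_{\nu,r(t)}(t-\eta)\,g(\eta)\,d\eta ,
\]
so that (\ref{Kjest}) is precisely the assertion $\|S\|_{L^{2}\to L^{2}}\le C\,2^{-j/4}$. The quantities $\|S\|$, $\|S^{*}\|$ and $\|SS^{*}\|^{1/2}$ coincide (with the convention that $+\infty$ is allowed; when one of them is finite, $S$ extends boundedly to $L^{2}$ and everything below is justified by density), so it suffices to identify $S^{*}$ and $SS^{*}$ as the integral operators in the statement. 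Interchanging the order of integration on the test class, the adjoint of $S$ is
\[
(S^{*}F)(\eta)=\int_{\mathbb{R}}\overline{K^{j}_{\nu,r(t)}(t-\eta)}\,F(t)\,dt .
\]
Since $K^{j}_{\nu,r}$ is real-valued and even --- immediate from (\ref{kndef}), the Fourier transform in that formula being applied to the even function $y\mapsto m^{j}_{\nu}(\sqrt{|y|}|r|)$ --- the conjugation is vacuous and the operator on the left of (\ref{ttmid}) is exactly $S^{*}$; hence (\ref{ttmid}) with constant $C\,2^{-j/4}$ is equivalent to (\ref{Kjest}) by $\|S^{*}\|=\|S\|$.

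For the remaining equivalence I would simply compose the two displays above, obtaining
\[
(SS^{*}F)(t)=\int_{\mathbb{R}}\int_{\mathbb{R}}K^{j}_{\nu,r(t)}(t-\eta)\,\overline{K^{j}_{\nu,r(t')}(t'-\eta)}\,d\eta\;F(t')\,dt',
\]
which is exactly the operator appearing on the left of (\ref{ttest2}). As $\|SS^{*}\|=\|S\|^{2}$, the bound $\|SS^{*}\|\le C\,2^{-j/2}$ holds if and only if $\|S\|\le C^{1/2}2^{-j/4}$; together with the previous paragraph this yields the full chain (\ref{Kjest})$\Leftrightarrow$(\ref{ttmid})$\Leftrightarrow$(\ref{ttest2}), up to inessential changes of $C$. (If one prefers to avoid mentioning possibly infinite norms, the same circle can be closed entirely on the Schwartz class through the bilinear pairing, using that $\langle SS^{*}F,F\rangle=\|S^{*}F\|_{2}^{2}$.)

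The points requiring attention are purely routine: that $(t,\eta)\mapsto K^{j}_{\nu,r(t)}(t-\eta)$ be measurable for an arbitrary measurable $r(\cdot)$, which is clear since $(r,\eta)\mapsto K^{j}_{\nu,r}(\eta)$ is continuous by (\ref{kndef}); and that the Fubini interchanges above be legitimate, which is secured by first working with $g,F$ in the Schwartz class used throughout the paper --- where the cut-off $m^{j}_{\nu}$ makes each $K^{j}_{\nu,r}$ a Schwartz function --- and then extending by density. I do not anticipate any genuine difficulty in this lemma; it serves only to reduce (\ref{esttj}) to the bilinear kernel estimate (\ref{ttest2}), which is where the substantive work of this section lies.
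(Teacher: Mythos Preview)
Your proof is correct and is essentially the paper's own argument: both run the standard $T^{*}T$ duality scheme, identifying the operator in (\ref{ttmid}) as the adjoint of the one in (\ref{Kjest}) and the operator in (\ref{ttest2}) as $SS^{*}$. The only cosmetic difference is packaging---you invoke the abstract identities $\|S\|=\|S^{*}\|$ and $\|SS^{*}\|=\|S\|^{2}$, while the paper writes out the corresponding bilinear pairings and duality steps by hand; your extra remark that $K^{j}_{\nu,r}$ is real and even (so the conjugate in $S^{*}$ is vacuous and (\ref{ttmid}) literally matches) is a nice clarification the paper leaves implicit.
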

\begin{proof}
Suppose we have (\ref{Kjest}), we want to show it implies (\ref{ttmid}).
We multiply the integrant on the left hand side of (\ref{Kjest})
with arbitrary $L^2$ function F(t), then  integrate over $t$, $\eta$. We
 apply Holder's inequality and (\ref{Kjest}) to obtain
\begin{equation}
\int \int_{\mathbb{R}}K_{\nu,r(t)}^j(t-\eta)F(t) dt g(\eta)d\eta \leq Ce^{-\frac14j}\Vert g \Vert_{L^2} \Vert F\Vert_{L^2}.
\end{equation}
Use the property that $L^2$ is self-dual, i.e. \begin{equation}
\Vert h \Vert_{L^2}=\sup_{f \in L^2}\frac{\int f(t)h(t)dt}{\Vert
f\Vert_{L^2}}.
\end{equation} We obtain (\ref{ttmid}).
Using
the same argument again, we can prove
$(\ref{Kjest})\Longleftrightarrow (\ref{ttmid})$.

Suppose we have  (\ref{ttmid}), we will show that (\ref{ttest2}) holds .
We multiply the integrant on the left hand side of (\ref{ttest2}) with
arbitrary $L^2$ function $G(t)$ and integrate over $\eta$, $t'$, and $t$. We change the order of integration and apply Holder's inequality and (\ref{ttmid}) to obtain
\begin{eqnarray*}
|\int\int\int K^j_{\nu,r(t)}(t-\eta)\overline{K^j_{\nu,r(t')}(t'-\eta)}d\eta
F(t')dt'G(t)dt|\\ \leq  \Vert\int K^j_{\nu,r(t)}(t-\eta)G(t)dt \Vert_{L^2} \Vert \int \overline{K^j_{\nu,r(t')}(t'-\eta)} F(t')dt' \Vert_{L^2}
\\
\leq Ce^{-\frac12j} \Vert G\Vert_{L^2}\Vert F\Vert_{L^2}
, \end{eqnarray*} which implies (\ref{ttest2}) by duality.

Suppose (\ref{ttest2}) holds.  We multiply the integrant on the left hand
side of (\ref{ttest2}) with complex conjugate of $F(t)$, $\overline{F(t)}$ , integrate over  $\eta$, $t'$, and $t$, apply Holder's inequality
and (\ref{ttest2}), we obtain (\ref{ttmid}). This completes the proof.
\end{proof}
Thus, to prove (\ref{Kjest0}), I have to to prove (\ref{ttest2}).
Inequality (\ref{ttest2}) will follow from the following estimate.
\begin{lem}\label{lemtt}
For any $a,b>0$, $t$, $t'\in \mathbb{R}$ we have
\begin{equation}
|\int K^j_a(t-\eta)\overline{K^j_b(t'-\eta}d\eta|<\frac1{a^2}\Phi_j(\frac{|t-t'|}{a^2})\label{TTest}
\end{equation}
where $\Phi_j$ is even non-increasing non-negative function with
\begin{equation}
 \Vert\Phi_j\Vert_{L_1}=\Vert \frac1{a^2}\Phi_j(\frac{y}{a^2})\Vert_{L_1(y)} \leq C2^{-\frac1{2}j}
\end{equation}
\end{lem}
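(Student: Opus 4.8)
The plan is to compute the inner convolution $\int K^j_a(t-\eta)\overline{K^j_b(t'-\eta)}\,d\eta$ by passing to the Fourier side in the $\eta$ variable, where the two kernels become the multipliers appearing in \eqref{tjrdefeq}. Recall from \eqref{kndef} that $K^j_{\nu,r}(\eta)=\frac12\int_{\mathbb R}m^j_\nu(\sqrt{|y|}\,|r|)e^{i\eta y}\,dy$, so $\widehat{K^j_{\nu,r}}(\xi)$ is (up to constants) $m^j_\nu(r|\xi|^{1/2})$ supported on $|\xi|\sim 2^{2j}/r^2$. By Plancherel in $\eta$, the convolution integral equals $c\int e^{i\xi(t-t')}\,m^j_\nu(a|\xi|^{1/2})\,\overline{m^j_\nu(b|\xi|^{1/2})}\,d\xi$ after the change of variables $r=a$, $r=b$; note this is exactly $2a^{-2}K^{j,\ast}\big((t-t')/a^2\big)$ for a suitable kernel once we rescale $\xi\mapsto \xi/a^2$, which is the source of the $\frac1{a^2}\Phi_j(\cdot/a^2)$ form demanded in the statement. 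So the problem reduces to bounding, uniformly in the ratio $a/b$, the oscillatory integral $\Psi_j(s):=\int e^{is\xi}\,m^j_\nu(|\xi|^{1/2})\,\overline{m^j_\nu\big((b/a)|\xi|^{1/2}\big)}\,d\xi$ and showing $\|\Psi_j\|_{L^1}\lesssim 2^{-j/2}$.

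The key steps, in order, would be: (1) Rewrite the product of multipliers as a single function $\Theta_j(\xi)$ supported where both $m^j_\nu(|\xi|^{1/2})$ and $m^j_\nu((b/a)|\xi|^{1/2})$ are nonzero; this forces $|\xi|^{1/2}\sim 2^j$ and simultaneously $(b/a)|\xi|^{1/2}\sim 2^j$, hence the support is empty unless $a\sim b$, and in that case $|\xi|\sim 2^{2j}$ with $|\xi|$-measure of the support $\sim 2^{2j}$. (2) Get the pointwise bound $|\Psi_j(s)|\lesssim 2^{2j}$ trivially from the support size. (3) For the decay in $s$, integrate by parts twice in $\xi$: each derivative hitting $m^j_\nu(|\xi|^{1/2})$ costs a factor $|\xi|^{-1/2}\sim 2^{-j}$ times the $L^\infty$ bounds on $m_\nu$ and its derivatives (available from \eqref{jnuineq} and the Bessel ODE, exactly as in the low-frequency section, or from Lemma \ref{jnunu} in the appendix), so two integrations by parts gain $2^{-2j}|s|^{-2}$ while the $\xi$-integration over a set of measure $2^{2j}$ contributes $2^{2j}$; thus $|\Psi_j(s)|\lesssim 2^{2j}\min(1, 2^{-2j}|s|^{-1}\cdot\text{(width)}^{-1})$, and tracking the width $\sim 2^{2j}$ carefully gives $|\Psi_j(s)|\lesssim 2^{2j}(1+2^{-2j}|s|)^{-2}$. (4) Define $\Phi_j(y)$ to be the even non-increasing majorant $c\,2^{2j}(1+2^{-2j}|y|)^{-2}$ rescaled appropriately; then $\|\Phi_j\|_{L^1}\sim 2^{2j}\cdot 2^{2j}=2^{4j}$ — which is too big, so in fact one must extract the extra factor $2^{-j/2}$ from the smallness of $m^j_\nu$ itself, namely $\|m^j_\nu\|_{L^\infty}\lesssim 2^{-j/2}$ coming from the decay $|J_\nu(\eta)|\lesssim \eta^{-1/2}$ for $\eta\sim 2^j\gg\nu$; squaring the two multiplier factors gives a gain of $2^{-2j}$, and combined with the width one lands on $\|\Phi_j\|_{L^1}\lesssim 2^{-j/2}$ after the rescaling normalizes away the $a$-dependence.

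The main obstacle I expect is step (3)–(4): making the bookkeeping between the support width $\sim 2^{2j}$, the oscillation decay in $s$, and the amplitude smallness $2^{-j/2}$ fit together to land precisely on the exponent $2^{-j/2}$ in $\|\Phi_j\|_{L^1}$ rather than something weaker. In particular, one must use the sharp asymptotic $|J_\nu(\eta)|\lesssim \eta^{-1/2}$ (and matching bounds on derivatives, which lose only powers of $2^{-j}$, not of $\nu$) on the whole support $\eta\sim 2^j$, uniformly in $\nu$ for $j\gg\log\nu$; this is exactly the content one needs from the appendix lemma. A secondary subtlety is confirming that the $a\ne b$ case genuinely contributes nothing beyond what the single scaling $\frac1{a^2}\Phi_j(\cdot/a^2)$ allows: since the supports of $m^j_\nu(a|\xi|^{1/2})$ and $m^j_\nu(b|\xi|^{1/2})$ overlap only when $a/b$ is within a bounded factor of $1$, on that overlap $b^{-2}$ and $a^{-2}$ are comparable, so the asymmetric statement in \eqref{TTest} with $a^{-2}\Phi_j(|t-t'|/a^2)$ is legitimate; I would state this comparability explicitly before concluding.
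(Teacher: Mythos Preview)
Your reduction via Plancherel, the scaling identity $\phi^j_{a,\alpha}(s)=a^{-2}\phi^j_{1,\alpha}(s/a^2)$, and the observation that the supports of $m^j_\nu(a|\xi|^{1/2})$ and $m^j_\nu(b|\xi|^{1/2})$ overlap only when $a\sim b$ are all correct and match the paper exactly. The gap is in steps~(3)--(4): plain integration by parts in $\xi$, treating $m^j_\nu(|\xi|^{1/2})$ as an amplitude, cannot deliver the factor $2^{-j/2}$. If you carry your own bookkeeping through carefully you get, on the support $|\xi|\sim 2^{2j}$, the bounds $|m^j_\nu|\lesssim 2^{-j/2}$ and $|(m^j_\nu)'|\lesssim 2^{-j/2}$ (same size, not smaller), hence $|\Theta_j|\lesssim 2^{-j}$, $|\partial_\xi^2\Theta_j|\lesssim 2^{-3j}$, and with support measure $\sim 2^{2j}$ this yields $|\Psi_j(s)|\lesssim \min(2^j,\,2^{-j}|s|^{-2})$. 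The $L^1_s$ norm of this majorant is only $O(1)$: the crossover is at $|s|\sim 2^{-j}$ and no number of integrations by parts moves it. Your accounting in~(4) that lands on $2^{-j/2}$ is not internally consistent.

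The missing idea is that for $\eta\sim 2^j\gg\nu$ the function $m^j_\nu$ is \emph{not} a slowly varying bump but carries an oscillatory phase: the large-argument Bessel asymptotic gives $m^j_\nu(\eta)=\sum_{\pm}2^{-j/2}e^{\pm i\eta}\psi_j^{\pm}(2^{-j}\eta)$ with $\psi_j^{\pm}$ genuine bumps (this is \eqref{mjexpres} in the paper; note that neither \eqref{jnuineq} nor Lemma~\ref{jnunu} is the right input here, since those treat $r$ small and $r\sim\nu$ respectively). One must pull the phases $e^{\pm i\eta}$ out and combine them with $e^{is\xi}$. After the substitution $z=2^{-j}|\xi|^{1/2}$ the total phase becomes $(\pm1\pm\alpha)2^j z+2^{2j}s\,z^2$ with $\alpha=b/a\in(\tfrac14,4)$. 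Stationary phase (nonvanishing second derivative $2\cdot 2^{2j}s$) then gives the intermediate bound $|\Psi_j(s)|\lesssim |s|^{-1/2}$ on the range $2^{-2j}\lesssim|s|\lesssim 2^{-j}$, while nonstationary phase gives rapid decay for $|s|\gg 2^{-j}$. It is precisely $\int_{2^{-2j}}^{c\,2^{-j}}|s|^{-1/2}\,ds\sim 2^{-j/2}$ from this intermediate regime that produces the claimed $\|\Phi_j\|_{L^1}\lesssim 2^{-j/2}$; the trivial bound $2^j$ on $|s|\lesssim 2^{-2j}$ and the rapid decay for $|s|\gtrsim 2^{-j}$ contribute only $O(2^{-j})$.
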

 The estimate (\ref{TTest}) does not depend on $b$. And, $\Phi_j$ is even non-increasing non-negative. Thus, we have
\begin{eqnarray*}
|\int_{\mathbb{R}}\int_{\mathbb{R}}K^j_{r(t)}(t-\eta)\overline{K^j_{r(t')}(t'-\eta)}d\eta
F(t')dt' |\leq \int_{\mathbb{R}}\frac1{r(t)^2}\Phi_j(\frac{|t-t'|}{r(t)^2})
|F(t')|dt' \\ \leq  \sup_{r>0}  \int_{\mathbb{R}}\frac1{r^2}\Phi_j(\frac{|t-t'|}{r^2})|F(t')|dt'
 \leq \Vert \Phi_j\Vert_{L^1} M(F)(t) \leq C 2^{-\frac{j}2}M(F)(t)
,\end{eqnarray*} where $M(F)$ is Hardy-Littlewood maximal function of $F$. We apply Hardy-Littlewood maximal inequality (\ref{HLMInq})
to finish the proof of (\ref{esttj}).
\begin{proof}(Lemma \ref{lemtt})
The kernel is in the form of inverse Fourier transform
\begin{equation}
K^j_{\nu,r}(\eta)=\int_{\mathbb{R}}m_\nu^j(r|y|^{1/2})e^{i\eta y}dy
\label{kjakjbex}.\end{equation}
By Plancherel's theorem, we have
\begin{equation}
\int K^j_{\nu,a}(t-\eta)\overline{K^j_{\nu,b}(t'-\eta)}d\eta=\int m_\nu^j(a|y|^{1/2})\overline{m_\nu^j(b|y|^{1/2})}e^{i(t-t')y}dy
.\end{equation}
From the standard asymptotic of Bessel functions (see \cite{stein}), we have
\begin{equation}
m_\nu^j(\xi)=\sum_{\pm}2^{-j/2}e^{\pm i \xi} \psi^{\pm}_j(2^{-j}\xi)
.\label{mjexpres} \end{equation} where $\psi^{\pm}_j(\xi)$ are
function supported on $|\xi|\sim1$ and bounded  uniformly in $j$, $\nu$ .
 We can rewrite the right hand side of (\ref{kjakjbex}) as a finite number of expressions of the form
\begin{equation}
2^{-j}|\int e^{i(\pm a \pm b)|y|^{1/2}}e^{i(t-t')y}\psi^\pm_j(2^{-j}a|y|^{1/2})\psi^\pm_j(2^{-j}b|y|^{1/2}) dy|
\label{kjaymp},\end{equation}
where $\pm$ sign need not agree.
Since the bump functions are supported on $(\frac12,2)$, this expression is identically  zero except when $\frac14<\frac ba<4$. Let $\alpha=\frac ba$
The expression in (\ref{kjaymp}) becomes,
\begin{equation}
2^{-j}|\int e^{i(\pm 1 \pm \alpha)a|y|^{1/2}}e^{i(t-t')y}\psi^\pm_j(2^{-j}a|y|^{1/2})\psi^\pm_j(2^{-j}\alpha a|y|^{1/2}) dy|
\end{equation}Let $s=(t-t')$.
We name the sum of this expression $\phi^j_{a,\alpha}(s)$. By changing variable $y=a^2y$, we can see that
\begin{equation}
\phi^j_{a,\alpha}(s)=\frac1{a^2}\phi^j_{1,\alpha}\left(\frac{s}{a^2}\right).
\end{equation}
By letting $s'=\frac s {a^2}$, we have
\begin{equation}
\Vert \phi^j_{a,\alpha}(s) \Vert_{L_1}=\Vert \phi^j_{1,\alpha}(s) \Vert_{L_1}.
\end{equation}Now,
$\phi^j_{1,\alpha}(s)$ is finite sum of the following expressions
\begin{equation}
2^{-j}\left|\int e^{i(\pm 1 \pm
\alpha)|y|^{1/2}}e^{i(s)y}\psi^\pm_j(2^{-j}|y|^{1/2})\psi^\pm_j(2^{-j}\alpha
|y|^{1/2}) dy\right| \label{Kj3}.\end{equation} Changing  the
variable $z=2^{-j}|y|^{1/2}$, the expression in (\ref{Kj3}) becomes
\begin{equation}
2^{j+2}\left|\int_0^\infty e^{i(\pm 1 \pm
\alpha)2^jz+s2^{2j}z^2}\psi^\pm_j(z)\psi^\pm_j(\alpha z) zdz\right|
\end{equation}
We will prove the following lemma.
\begin{lem} \label{lmphi}
$\phi^j_{1,\alpha}$ is controlled by the following function
\begin{equation}
\Phi_j(s)=C\left\{ \begin{array}{lr} 2^{j} &  when\ 0< s\leq 2^{-2j} \\s^{-1/2} & when\ 2^{-2j}<s<40\ 2^{-j}\\2^j(2^{2j}s)^{-10}& otherwise . \end{array} \right.
\end{equation}
\end{lem}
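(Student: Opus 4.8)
The plan is to reduce Lemma \ref{lmphi} to a straightforward case-by-case application of van der Corput's lemma. Recall that $\phi^j_{1,\alpha}(s)$ is a finite sum (at most four terms, one per choice of the two $\pm$ signs) of quantities of the form
\begin{equation*}
2^{j+2}\left|\int_0^\infty e^{i\Psi(z)}a(z)\,dz\right|,\qquad \Psi(z)=\beta\,2^j z+s\,2^{2j}z^2,\ \ \beta=\pm1\pm\alpha,
\end{equation*}
where $a(z)=\psi^\pm_j(z)\psi^\pm_j(\alpha z)\,z$ is supported in the fixed interval $(\tfrac12,2)$ and, together with all of its derivatives up to order ten, is bounded uniformly in $j$, $\nu$, and $\alpha\in(\tfrac14,4)$; this uniformity is precisely what the standard asymptotic expansion of $J_\nu$ underlying (\ref{mjexpres}) supplies. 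Since $|\beta|\le 5$ it suffices to bound one such integral $I=\int_0^\infty e^{i\Psi}a\,dz$; abbreviating $\lambda=2^j$ we have $\Psi'(z)=\beta\lambda+2s\lambda^2 z$ and $\Psi''(z)\equiv 2s\lambda^2$, and I treat $s>0$, the case $s<0$ being identical.

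Next I would split according to the size of $s\lambda^2$ and of $s\lambda$, which are exactly the thresholds appearing in $\Phi_j$. If $s\lambda^2\le1$ (that is, $s\le 2^{-2j}$), the trivial bound $|I|\le\int|a|\le C$ gives $2^{j+2}|I|\le C2^j$. If $1<s\lambda^2$ and $s\lambda<40$ (that is, $2^{-2j}<s<40\cdot2^{-j}$), the second-derivative van der Corput estimate applies: $|\Psi''|=2s\lambda^2$ on $(\tfrac12,2)$ and $\|a\|_\infty+\|a'\|_{L^1}=O(1)$ uniformly, so $|I|\le C(s\lambda^2)^{-1/2}$ and hence $2^{j+2}|I|\le C2^j(s\,2^{2j})^{-1/2}=Cs^{-1/2}$. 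Finally, if $s\lambda\ge40$ (that is, $s\ge40\cdot2^{-j}$), the unique critical point $z_0=-\beta/(2s\lambda)$ of $\Psi$ obeys $|z_0|<5/80<\tfrac12$ and so lies outside the support of $a$; consequently $|\Psi'(z)|\ge 2s\lambda^2\cdot\tfrac12-5\lambda\ge\tfrac78\,s\lambda^2$ there, $\Psi'$ is monotone, and all derivatives of $\Psi$ past the second vanish. Since $\Psi$ is a quadratic polynomial with $|\Psi'|\sim s\lambda^2$ and $|\Psi''|\sim s\lambda^2$ on the support of $a$, repeated integration by parts yields $|I|\le C_N(s\lambda^2)^{-N}$ for every $N$; taking $N=10$ gives $2^{j+2}|I|\le C2^j(s\,2^{2j})^{-10}$.

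Combining the three estimates gives $\phi^j_{1,\alpha}(s)\le C\Phi_j(s)$ for $\Phi_j$ as in the statement, and it remains to record the elementary properties of $\Phi_j$ used downstream: extended evenly to $s<0$ it is non-negative and non-increasing---the two breakpoints are $s=2^{-2j}$, where the first two branches coincide, and $s=40\cdot2^{-j}$, where the third branch is far smaller than the second---and its $L^1$ norm is governed by the middle branch, $\int_{2^{-2j}}^{40\cdot2^{-j}}s^{-1/2}\,ds\le C2^{-j/2}$, with the other two branches contributing only $O(2^{-j})$ and $O(2^{-10j})$; this is the bound $\|\Phi_j\|_{L^1}\le C2^{-j/2}$ needed in Lemma \ref{lemtt}. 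The only genuinely delicate point is bookkeeping the uniformity of all constants in $j$, $\nu$ and $\alpha$: one must verify that the amplitudes $\psi^\pm_j$, and hence $a$ together with the derivatives consumed by ten integrations by parts, satisfy symbol-type bounds uniform in every parameter---which is exactly the role of the precise form of the Bessel asymptotics recorded in (\ref{mjexpres}). Everything else is a routine stationary-phase computation.
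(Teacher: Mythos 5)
Your proposal is correct and follows essentially the same three-case route as the paper (trivial bound for $s\le 2^{-2j}$, second-derivative van der Corput for the middle range, non-stationary phase when the critical point $z_0=-\beta/(2s\lambda)$ falls outside the support of the amplitude); you have merely spelled out the derivative lower bound $|\Psi'|\gtrsim s\lambda^2$, the role of the quadratic phase in the repeated integration by parts, and the uniformity of the symbol bounds on $\psi_j^\pm$, all of which the paper leaves implicit.
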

Notice that we can estimate directly and get $\Vert \Phi_j \Vert_{L_1}\leq C2^{-\frac{j}2}$.\end{proof}

The remaining task is to prove the lemma (\ref{lmphi}).
\begin{proof}(Lemma \ref{lmphi})
Take absolute value of the integrant to obtain the bound $2^j$. We will use stationary phase technique to prove the other two estimates. We call the function on the index of exponential phase.
If we differentiate the phase, we get
$
2\ 2^{2j}sz+(\pm1\pm\alpha)2^j
$.
Note $\alpha$ is between $1/4$ and $4$, and the product of bump functions  is supported on $(1/8,8)$. Suppose $s>20\ 2^{-j}$,
the derivative is never zero on the support. We get the bound $2^j(2^{2j}s)^{-10}$ from non-stationary phase analysis. Otherwise,
we note the second derivative of the phase, namely $
2^{2j+1}s$, is not zero. By stationary phase analysis we obtain the bound $s^{-1/2}$.

\end{proof}

\section{Appendix: Estimate of Bessel Function around $\nu$ }
\begin{lem}\label{jnunu}
\par For the Bessel function $J_\nu$ of positive order $\nu$ and
when $\frac12\nu \leq r \leq 2\nu$, we have the following estimates
\begin{eqnarray}
J_\nu(r)&\leq& C\nu^{-\frac13}(1+\nu^{-\frac13}|r-\nu|)^{-\frac14}\label{Corput1}\\
J_\nu^{\prime}(r)& \leq & C\nu^{-\frac12}\label{Corput2}.
\end{eqnarray}
\end{lem}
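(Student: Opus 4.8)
The plan is to prove both bounds from the Schl\"afli integral representation
\[
J_\nu(r)=\frac1\pi\int_0^\pi\cos(\nu\theta-r\sin\theta)\,d\theta-\frac{\sin\nu\pi}{\pi}\int_0^\infty e^{-\nu t-r\sinh t}\,dt ,
\]
differentiating under the integral sign for the derivative bound. The non-oscillatory term and its $r$-derivative are $O(r^{-1})=O(\nu^{-1})$ on the range $r\ge\nu/2$, so everything reduces to oscillatory integrals $\int_0^\pi\psi(\theta)e^{i\phi(\theta)}\,d\theta$ with $\psi\equiv1$ (for $J_\nu$) or $\psi(\theta)=\sin\theta$ (for $J_\nu'$) and phase $\phi(\theta)=\nu\theta-r\sin\theta$, whose derivatives are $\phi'(\theta)=\nu-r\cos\theta$, $\phi''(\theta)=r\sin\theta$, $\phi'''(\theta)=r\cos\theta$. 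The unique critical point $\theta_0$ solves $\cos\theta_0=\nu/r$; it exists iff $r\ge\nu$, and for $r$ near $\nu$ it satisfies $\theta_0\sim\sqrt{(r-\nu)/\nu}$ with $|\phi''(\theta_0)|=r\sin\theta_0\sim\sqrt{\nu\,|r-\nu|}$. At $r=\nu$ the critical point collapses onto the endpoint $\theta=0$, where $\phi'$ and $\phi''$ both vanish but $\phi'''(0)=r\neq0$; this Airy-type degeneracy is precisely what forces the exponent $\tfrac13$.

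I would split the range $\tfrac12\nu\le r\le2\nu$ according to whether $|r-\nu|\le c\,\nu^{1/3}$ or $|r-\nu|\ge c\,\nu^{1/3}$ and apply van der Corput's lemma in the form recorded in \cite{stein}, where the implied constant is independent of the interval and of the lower-order coefficients of $\phi$. In the \emph{turning-point regime} $|r-\nu|\le c\,\nu^{1/3}$ the weight $(1+\nu^{-1/3}|r-\nu|)^{-1/4}$ is comparable to $1$, so it suffices to show $|J_\nu(r)|\lesssim\nu^{-1/3}$: on $[0,\pi/4]$ one has $|\phi'''|=r\cos\theta\gtrsim\nu$ and the third-derivative estimate gives $\lesssim\nu^{-1/3}$ (this covers the degenerate critical point and, when $r<\nu$, the absence of one), while on $[\pi/4,\pi]$ one has $|\phi'|\gtrsim\nu$ with $1/\phi'$ monotone, so one integration by parts gives $\lesssim\nu^{-1}$. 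In the \emph{oscillatory regime} $r\ge\nu+c\,\nu^{1/3}$ one localizes near $\theta_0$: on $[\theta_0/2,2\theta_0]$ one has $|\phi''|\gtrsim\sqrt{\nu|r-\nu|}$, so the second-derivative estimate contributes $\lesssim(\nu|r-\nu|)^{-1/4}$, which is exactly the advertised weight because there $1+\nu^{-1/3}|r-\nu|\sim\nu^{-1/3}|r-\nu|$; away from $\theta_0$ one verifies $|\phi'|\gtrsim|r-\nu|$ with $1/\phi'$ monotone, and one integration by parts gives $\lesssim|r-\nu|^{-1}$, which is $\le(\nu|r-\nu|)^{-1/4}$ exactly when $|r-\nu|\ge c\,\nu^{1/3}$ — so the two regimes meet at the threshold. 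The case $r<\nu$ with $|r-\nu|\ge c\,\nu^{1/3}$ has no critical point, and the same single integration by parts finishes it.

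For the derivative bound (\ref{Corput2}) I would rerun this dissection on $J_\nu'(r)=\tfrac1\pi\int_0^\pi\sin\theta\,\sin\phi(\theta)\,d\theta+O(\nu^{-1})$, exploiting that the extra amplitude $\sin\theta$ is small exactly where the phase is nearly stationary. Near the turning point the stationary contribution is concentrated in $\theta\in[0,A\nu^{-1/3}]$, where $\|\sin\theta\|_{L^\infty}+\|(\sin\theta)'\|_{L^1}\lesssim\nu^{-1/3}$, so the cubic estimate with amplitude yields $\lesssim\nu^{-1/3}\cdot\nu^{-1/3}=\nu^{-2/3}\le\nu^{-1/2}$; near an interior $\theta_0$ one has $\sin\theta\sim\theta_0\sim\sqrt{|r-\nu|/\nu}$ on the relevant interval, so the second-derivative estimate with amplitude gives $\lesssim(\nu|r-\nu|)^{-1/4}\sqrt{|r-\nu|/\nu}=\nu^{-3/4}|r-\nu|^{1/4}\le\nu^{-1/2}$ for $r\le2\nu$; the non-stationary pieces carry the small factor $\sin\theta$ and are better still. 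For $\nu$ in any fixed bounded set both inequalities are trivial by continuity, so one may assume $\nu$ large throughout, which also disposes of any discreteness concern.

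I expect the main obstacle to be keeping the constants genuinely uniform in $\nu$ while the phase changes character — specifically, matching the two regimes across $|r-\nu|\sim\nu^{1/3}$ and, inside the turning-point regime, handling the interaction between the collapsing critical point $\theta_0$ and the hard endpoint $\theta=0$, which is why one must use the endpoint-robust (interval-independent) van der Corput estimate rather than a naive stationary-phase expansion. An alternative that bypasses the case analysis is to invoke the Langer/Debye uniform asymptotics of $J_\nu(\nu\zeta)$ in terms of the Airy function and to read off (\ref{Corput1})–(\ref{Corput2}) from the standard bounds $|\mathrm{Ai}(x)|\lesssim(1+|x|)^{-1/4}$ and $|\mathrm{Ai}'(x)|\lesssim(1+|x|)^{1/4}$ together with the known error terms; the van der Corput route is, however, the more elementary and self-contained one.
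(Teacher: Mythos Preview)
Your proposal is correct and follows essentially the same route as the paper: Schl\"afli's integral representation, a trivial $O(\nu^{-1})$ bound on the non-oscillatory piece, and a stationary-phase analysis of the oscillatory integral with the threshold $|r-\nu|\sim\nu^{1/3}$ separating the Airy regime from the dispersive one, together with the observation that the amplitude $\sin\theta$ is small near the critical point for the derivative bound. The only difference is packaging: the paper carries out the stationary phase by hand via an explicit $N_\varepsilon/S_\varepsilon$ split and balances $\varepsilon$ against the integration-by-parts remainder, whereas you invoke van der Corput's lemma directly --- these are the same argument.
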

\begin{proof}
We have an integral representation for the Bessel function of order $\nu>-\frac12$ (see\cite{wa}).
\begin{equation}
J_\nu(r)=A_\nu(r)-B_\nu(r),
\end{equation}
where \begin{equation}
A_\nu(r)=\frac1{2\pi}\int^\pi_{-\pi}e^{-i(r\sin\theta-\nu\theta)}d\theta
\label{Anu}\end{equation}
\begin{equation}
B_\nu(r)=\frac{\sin(\nu\pi)}\pi\int^\infty_0e^{-\nu t -r\sinh(t)}dt
.\end{equation}
 We can see \begin{equation}B_\nu(r)<\frac{\sin(\nu\pi)}\pi\int^\infty_0e^{-\nu t }dt<C\nu^{-1}.\end{equation} So, we only need to estimate $A_\nu$. We will accomplish this using stationary phase for  two different cases $r>\nu$  and $r\leq \nu$.
 Let us consider the case when $r>\nu$ first.
Call the phase in (\ref{Anu}) $\phi(\theta)=r\sin\theta-\nu \theta$.
 We differentiate the phase,  $\phi'(\theta)=(r\cos\theta-\nu)$. We find $\phi'=0$ at $\pm\theta_0$, where  $\theta_0=\cos^{-1}(\frac \nu r)$.
In order to obtain the estimate, we will break the integral into a small neighborhood around these points and the rest, that is
\begin{eqnarray}
N_\varepsilon&:=&\{\theta: \ | \theta\pm\theta_0|<\varepsilon\}\\
S_\varepsilon&:=&[-\pi,\pi]/ N_\varepsilon
\end{eqnarray}
Since the integrant is in (\ref{Anu}) is bounded, we have
\begin{equation}
|\int_{N_\varepsilon}e^{-i(r\sin\theta-\nu\theta)}d\theta|<c\varepsilon
.\end{equation}
On $S_\varepsilon$, we integrate by parts,
\begin{eqnarray*}
\int_{S_\varepsilon}e^{-i(r\sin\theta-\nu\theta)}d\theta=\left. \frac{e^{i(r\sin\theta-\nu\theta)}}{i(r\cos\theta-\nu)}\right|^{\{{\pi,\theta_0\pm\varepsilon}\}}_{\{{-\theta_0\pm\varepsilon},-\pi\}}
+\int_{S_\varepsilon}\frac{e^{i(r\sin\theta-\nu\theta)}r\sin\theta}{i(r\cos\theta-\nu)^2}d\theta.\end{eqnarray*}
All terms in the expression above are controlled by ${c}{|r\cos(\theta_0\pm\varepsilon)-\nu|^{-1}}$.
We want to balance the contribution from $N_\varepsilon$ and $S_\varepsilon $ by choosing proper $\varepsilon$, such that
\begin{equation}
\varepsilon \sim |r\cos(\theta_0\pm\varepsilon)-\nu|^{-1}
.\end{equation}
 Using trigonometric identities, $\cos(\theta_0 \pm \varepsilon)=\cos (\theta_0)\cos( \varepsilon) \mp \sin(\theta_0)\sin(\varepsilon) $,
and the definition of $\theta_0$, we have,
\begin{equation}
|r\cos(\theta_0\pm\varepsilon)-\nu|=|\nu\cos\varepsilon-\sqrt{r^2-\nu^2}\sin\varepsilon-\nu| \label{phasenu}.
\end{equation}
When $\varepsilon$ is small, (\ref{phasenu}) is approximately $\frac \nu2 \varepsilon^2+\varepsilon\sqrt{r^2-\nu^2}$. Thus, we have the two estimates
\begin{eqnarray}
|r\cos(\theta_0\pm\varepsilon)-\nu|^{-1} \leq 2\nu^{-1}\varepsilon^{-2} \label{ineq1}
,\\|r\cos(\theta_0\pm\varepsilon)-\nu|^{-1} \leq \varepsilon^{-1}(r^2-\nu^2)^{-\frac12}\label{ineq2}
.\end{eqnarray}
When $r-\nu$ is small, (\ref{ineq1}) is sharper.  We pick $\varepsilon \sim \nu^{-\frac13}$. When $r-\nu$
is big, (\ref{ineq2}) is sharper. We pick optimal $\varepsilon \sim (r^2-\nu^2)^{-\frac14}$. Since $ r\leq2\nu$
, we have
 $|(r^2-\nu^2)^{-\frac14}|<(3\nu)^{-\frac14}(r-\nu)^{-\frac14}$. Thus, we have proven (\ref{Corput1}) for the case $r\geq \nu$.

Now, we will discuss the case when $r\leq \nu$.
When $ \nu-\nu^{-\frac13}<r<\nu$, we follow the analysis above by choosing $\theta_0=0$, $\varepsilon=\nu^{-\frac13}$.
We have an estimate $J_\nu(r)\leq C \nu^{-\frac13}$. When $\nu-\nu^{-\frac13}>r$, we use non-stationary phase,
we obtain $J_\nu(r) \leq C\frac1{|\nu-r|} \leq C\nu^{-\frac14}|\nu-r|^{-\frac14}$.

The remaining task is to prove (\ref{Corput2}).
Considering the derivative of $J_\nu$, we  can show \begin{equation}|B^{\prime}_\nu(r)|=|\frac{\sin(\nu\pi)}\pi\int^\infty_0e^{-\nu t -r\sinh(t)}\sinh(t)dt|\leq c|\int^\infty_0e^{-\nu t }dt| \leq \frac{c}{\nu}.\end{equation} So, we only need to estimate
\begin{equation}
A^\prime_\nu(r)=\frac1{2\pi}\int^\pi_{-\pi}e^{i(r\sin\theta-\nu\theta)}i\sin\theta d\theta
\end{equation}
When $r>\nu$, break the integral into two as we did in the previous case.
\begin{equation}
|\frac1{2\pi}\int_{N_\varepsilon}e^{i(r\sin\theta-\nu\theta)}i\sin\theta d\theta|<C\varepsilon
\end{equation}
Integrate by parts for the integral on $S_\varepsilon$,
and use trigonometric identity and Taylor expansion. Then,  we can find it is controlled by
\begin{equation}
\frac{c_1\sqrt{\frac{r-\nu}{\nu}}+c_2\varepsilon}{\nu\frac{\varepsilon^2}{2}+c_3\sqrt{\nu}\sqrt{r-\nu}}
\end{equation}
If we balance between integral on $N_\varepsilon$ and $S_\varepsilon$, we get optimal $\nu^{-\frac12}$ .
For the case $r\leq\nu$, we can apply similar ideas  as in the proof of (\ref{Corput1}).
\end{proof}

\end{document}